\theoremstyle{plain}
\newtheorem{THEOREM}{Theorem}[section]
\newtheorem{theorem}[THEOREM]{Theorem}
\newtheorem{corollary}[THEOREM]{Corollary}
\newtheorem{lemma}[THEOREM]{Lemma}
\newtheorem{proposition}[THEOREM]{Proposition}
\newtheorem{assumption}[THEOREM]{Assumption}
\theoremstyle{definition}
\theoremstyle{remark}
\newtheorem{remark}[THEOREM]{Remark}
\newtheorem{example}[THEOREM]{Example}
\def \e {\varepsilon}
\def \l {\lambda}
\def \t {\tau}
\def \th {\theta}
\def \L {\Lambda}
\def \O {\Omega}
\def \cD {\mathcal{D}}
\newcommand{\R}{\ensuremath{\mathbb{R}}}   
\newcommand{\C}{\ensuremath{\mathbb{C}}}   
\def \dx  {\, \mbox{d}x}
\def \dt  {\, \mbox{d}t}
\def \dr  {\, \mbox{d}r}
\def \ds  {\, \mbox{d}s}
\def \dtau  {\, \mbox{d}\tau}
\def \ddt  {\frac{\mbox{d\,\,}}{\mbox{d}t}}
\def \Re {\mathrm{Re}}
\def \Im {\mathrm{Im}}
\title{Schr\"odinger-Lohe type models of quantum synchronization with nonidentical oscillators}
\author[P. Antonelli]{Paolo Antonelli}
\address{Gran Sasso Science Institute, viale Francesco Crispi, 7, 67100 L'Aquila, Italy}
\email{paolo.antonelli@gssi.it}
\author[D. Reynolds]{David N. Reynolds}
\address{Gran Sasso Science Institute, viale Francesco Crispi, 7, 67100 L'Aquila, Italy}
\email{david.reynolds@gssi.it}
\date{\today}
\begin{document}

\maketitle
\begin{abstract}
 We study the asymptotic emergent dynamics of two models that can be thought of as extensions of the well known Schr\"odinger-Lohe model for quantum synchronization. More precisely, the interaction strength between different oscillators is determined by intrinsic parameters, following Cucker-Smale communication protocol. Unlike the original Schr\"odinger-Lohe system, where the interaction strength was assumed to be uniform, in the cases under our consideration the total mass of each quantum oscillator is allowed to vary in time. A striking consequence of this property is that these extended models yield configurations exhibiting phase, but not space, synchronization. The results are mainly based on the analysis of the ODE systems arising from the correlations, control over the well known Cucker-Smale dynamics, and the dynamics satisfied by the quantum order parameter.
 \\
 
\noindent Keywords: emergence, quantum synchronization, Schr\"odinger-Lohe model, Cucker-Smale model
\end{abstract}

\section{Introduction}
The study of synchronization behavior has been studied for the last 400 years since Huygens noticed the synchronization of two pendulum clocks coupled by hanging on the same beam. The same synchronous behavior can also be seen in the synchronization of pacemaker cells in hearts, descriptions of social phenomena like a crowd clapping, and biological studies of crickets and fireflies. The quintessential model for describing and analyzing synchronization behavior is the Kuramoto model.

The classical Kuramoto model \cite{KUR} consisting of $N$ coupled phase oscillators and randomly distributed natural frequencies has been a subject of much study in the last half century since its inception in 1975, see for instance the review \cite{ABPRS}. Its deceptively simple design serves to model these classical synchronization behaviors impressively well. 
The natural question that Lohe \cite{LH2} addressed was whether we could observe similar synchronization behavior in higher-dimensional generalizations of the Kuramoto model, where the oscillators can move on a $n-$dimensional sphere. The models in \cite{LH2} can also be interpreted as a system of Schr\"odinger equations for wave functions attaining only $n$ quantum energy levels, see also \cite{LH1} where this quantum mechanical formulation was put forward in the infinite dimensional case, namely by considering the wave functions to belong to the Hilbert space $L^2(\R^d)$.

The standard Schr\"odinger-Lohe model \cite{LH1} may be written as
\begin{align}\label{eq:SL_or}
i\partial_t \psi_j&=H_j\psi_j+\frac{ik}{2N}\sum_{l=1}^N\left(\psi_l-\frac{\langle \psi_l,\psi_j\rangle}{\|\psi_j\|_2^2} \psi_j\right),\\
\psi_j(x,0)&=\psi_j^0(x), \ \ j=1,\dots,N, \nonumber
\end{align}
with coupling strength $k>0$ and $\langle f,g \rangle=\int_{\R^d} \bar{f}(x)g(x) \dx$ is the standard inner product on $L^2(\R^d)$, and $H_j$ are Hamiltonian operators, that we assume to have the form $H_j=H+\O_j$, where the real constants $\O_j$ play the same role of the natural frequencies in the classical Kuramoto model. Our results apply to a large class of Hamiltonian operators $H$ that will be discussed below.

The Schr\"odinger-Lohe model describes a system of $N$ quantum oscillators, all connected to each other, nonlinearly interacting among themselves through unitary transformations. In particular this means that the $L^2$ norm of each wave function is conserved in time (as it can also be checked by direct calculation), despite the presence of the non-Hamiltonian terms in \eqref{eq:SL_or} that drive the system towards synchronization.

Although system \eqref{eq:SL_or} was not derived as a relevant model for specific applications, it is a relatively simple generalization of the Kuramoto system to a quantum context, that serves as a good theoretical starting point to investigate synchronization in quantum systems. 
In fact, the modeling of some quantum circuits suggests that nonlinearities in connecting optical fibers could be described by the nonlinear effects encoded in \eqref{eq:SL_or}, see \cite{LNNTB, Nigg} for related discussions.
Moreover, the study of quantum synchronization models as in \eqref{eq:SL_or} could provide some insights in the study of quantum coherence and entanglement, with applications towards quantum computations, communication and metrology \cite{Kim}.

The Schr\"odinger-Lohe model \eqref{eq:SL_or} and the asymptotic behavior of its solutions was rigorously studied in \cite{AM, HHK}, see also \cite{CCH, HK_frust}. In particular, the case of two oscillators is well understood, as it is possible to show complete frequency synchronization under general assumptions on the initial data. On the other hand, the general system \eqref{eq:SL_or} with $N\geq3$ presents major difficulties so that a result is available only for identical oscillators, i.e. $\Omega_j=0$, for all $j=1, ..., N$, see also \cite{HHK}. In this case, under some assumptions on the initial data, the system experiences complete phase synchronization. 

Let us remark that, by formally considering wave functions with constant amplitude and that depend only on time, say $\psi_j=e^{iS_j}$, with $S_j=S_j(t)$, then the classical Kuramoto model is recovered from \eqref{eq:SL_or}. Moreover, the fact that in model \eqref{eq:SL_or} the individual $L^2-$norms are conserved, implies that not only the wave functions synchronize their phases, but they actually converge to the same quantum state.

Our aim in this paper is to investigate situations where the 
strength of each oscillator's ability to synchronize varies in time.
For this purpose, we introduce time dependent parameters $\theta_j$, that steer the Schr\"odinger-Lohe dynamics. In the present analysis such parameters control the coupling strength of the oscillators, see also Remark \ref{rmk:theta} below, and they are considered as instrinsic, determined by a Cucker-Smale protocol. In this way there may be configurations, that will be thoroughly discussed in our paper, where phase synchronization may occur without space synchronization. Indeed, a crucial feature provided by the introduction of the parameters $\theta_j$ is that the individual $L^2$ norms of each $\psi_j$ are no longer conserved.
\newline
The first model we consider reads as follows
\begin{align}\label{model1}
i\partial_t \psi_j&=H_j\psi_j+\frac{ik}{2N}\sum_{l=1}^N\left(\th_j\psi_l-\frac{\langle \psi_l,\psi_j\rangle}{\|\psi_j\|_2^2} \psi_j\right),\nonumber\\
\psi_j(x,0)&=\psi_j^0(x), \ \ j=1,\dots,N,\\
\dot{\th_j}&=\frac{\mu}{N}\sum_{k=1}^Nh(|x_j-x_k|)(\th_k-\th_j), \ \ \mu > 0, \nonumber\\
x_j&=\frac{1}{\|\psi_j\|_2^2}\int_{\mathbb{R}^d} x|\psi_j(x,t)|^2 \dx, \nonumber
\end{align}
where $h(r)$ is a radially symmetric influence function, and $x_j=x_j(t)$ represents the center of mass of the oscillator $\psi_j$. We will refer to this as Model 1. The second variation of the Schr\"odinger-Lohe model, or Model 2, we consider is given by
\begin{align}\label{m2}
i\partial_t \psi_j&=H_j\psi_j+\frac{ik}{2N}\sum_{l=1}^N\left(\th_j\psi_l-\langle \psi_l,\psi_j\rangle \psi_j\right),\nonumber\\
\psi_j(x,0)&=\psi_j^0(x), \ \ j=1,\dots,N,\\
\dot{\th_j}&=\frac{\mu}{N}\sum_{k=1}^Nh(|x_j-x_k|)(\th_k-\th_j), \ \ \mu \geq 0, \nonumber\\
x_j&=\frac{1}{\|\psi_j\|_2^2}\int_{\mathbb{R}^d} x|\psi_j(x,t)|^2 \dx. \nonumber
\end{align}
We notice that the only difference between Model 1 and Model 2 is the inclusion of the normalization by the $L^2$ norm in the synchronization part of the equation. This small difference leads to considerable dissimilarities in what results we are able to prove, and consequently the nature of each of the respective models.
Let us remark that similar coupling, involving parameters driven by the Cucker-Smale protocol has been recently considered in some alignment models of collective behavior \cite{LRS}, \cite{NR}, but not yet in the context of (classical and/or quantum) synchronization.

The non-conservation of the individual $L^2-$norms allows configurations where phase synchronization is achieved without space synchronization. In Theorems \ref{t:synch1} and \ref{ncssynch} we will show some results where phase synchronization occurs, but the $L^2-$norms of the single wave functions will be different. Further we give explicit examples \ref{bip}, and \ref{inc} of the emergence of both an unstable Bipolar synchronization state, and an incoherent state, respectively.

We conclude this introduction by mentioning some further results already existing in the literature about the Schr\"odinger-Lohe model \eqref{eq:SL_or} and related variants. In \cite{CCH} practical synchronization (i.e. synchronization in the limit when the coupling constant $k$ becomes larger and larger) was proved, \cite{AHKM} deals with the Wigner-type formulation of system \eqref{eq:SL_or}, \cite{HKR} studies the finite dimensional Lohe model first introduced in \cite{LH2}, \cite{FH} considers the Schr\"odinger-Lohe model with a multiplicative noise, in \cite{HK_frust} the Schr\"odinger-Lohe model with frustration is studied. The paper \cite{HHK_disc} analyzes a discrete version of \eqref{eq:SL_or}. We also address the interested reader to the recent review \cite{HK_rev} for a more comprehensive account of existing results.

The outline of the paper is as follows. 
Section \ref{sec:prel} provides some preliminaries on known models and introduces first basic facts on Models 1 and 2. 
In Section \ref{sec:Cau} we briefly discuss the Cauchy problems associated to systems \eqref{model1} and \eqref{m2}. 
Section \ref{sec:two} is devoted to the analysis of the case of $N=2$ oscillators. 
Complete synchronization for identical oscillators is proved in Theorem \ref{synch2} for both models, while the non-identical oscillator case is dealt with in Theorem \ref{thm:nio} only for Model 1. 
Section \ref{sec:m1} focuses on Model 1 with arbitrary $N$ identical oscillators. 
We prove complete synchronization both in the case of an absolute kernel for the Cucker-Smale protocol, Theorem \ref{t:synch1}, and in the case of $h(r)$ satisfying the Heavy Tail condition, Theorem \ref{thm:synch3}.
Section \ref{sec:m2} focuses on Model 2. 
In Proposition \ref{mod2sync} a wedge condition on the initial data and the Heavy Tail condition on $h(r)$ yields complete synchronization. 
Moreover, Proposition \ref{ncssynch} analyzes the case when the parameters $\theta_j$ are constant in time, i.e. $\mu=0$ in \eqref{m2}. 
Finally in Section \ref{sec:ex} we provide two explicit examples that highlight the differences between \eqref{eq:SL_or} and the models considered here.

\subsection{Notation}
Throughout the paper many constants $C$ appear and will often change from line to line. The inner product $\langle f,g\rangle=\int_{\mathbb{R^d}}\bar{f}(x)g(x) \dx$ will refer to the standard $L^2(\mathbb{R}^d)$ inner product, and $\|\cdot\|_2$ will be the corresponding norm on $L^2(\R^d)$.

\section{Preliminaries}\label{sec:prel}
\subsection{Alignment Preliminaries}
The recent book by Shvydkoy \cite{Rbook} overviews the current state of the art of flocking and alignment models of collective dynamics. However, to start let us lay out some of the definitions and theory that we are using with relation to the Cucker-Smale dynamics. Throughout the paper the parameters $\th_j$, which serve to change the mass of each oscillator (in differing ways depending on which model we are referring to), are controlled by the following Cucker-Smale protocol
\begin{align}
\dot{\th_j}&=\frac{\mu}{N}\sum_{k=1}^Nh(|x_j-x_k|)(\th_k-\th_j), \ \ \mu \geq 0, \nonumber\\
x_j&=\frac{1}{\|\psi_j\|_2^2}\int_{\mathbb{R}^d} x|\psi_j(x,t)|^2 \dx. \nonumber
\end{align}
Note that $\mu=0$ turns off the Cucker-Smale protocol, effectively freezing the parameters $\th_j$. The communication kernel, $h(r)$ will take two different forms throughout the paper:
\begin{enumerate}
\item Absolute kernel: $\inf_{r\geq0} h(r)=c>0$,
\item Heavy Tail: $\int^{\infty} h(r)=\infty$.
\end{enumerate}
Note that absolute kernels satisfy the heavy tail condition, and for either kernel, the symmetry of the kernel leads to the average, $\bar{\th}=\frac{1}{N}\sum_{j=1}^N\th_j$, being preserved in time, $\ddt \bar{\th}=0$, and thus we will let $\bar{\th}=1$. Further an important result that we will use is that regardless of initial conditions, the absolute kernel yields:
\begin{equation}\label{align}
\max_{j=1,...,N}|\th_j-\bar{\th}|(t)\lesssim e^{-ct}
\end{equation}
while for the heavy tail kernel, a priori, we only have
\begin{align}
    \ddt \max_{j=1,...,N}|\th_j-\bar{\th}|(t)\lesssim -\mu h(\cD(t)) \max_{j=1,...,N}|\th_j-\bar{\th}|
\end{align}
where $\cD(t)=\max_{i,j}|x_i-x_j|(t)$. As $x_j$ represents the center of mass of oscillator $\psi_j$, it is necessary to first prove that the distance between centers of masses remains bounded for all time before achieving the alignment of the parameters $\th_j \to \bar{\th}=1$. However, once that is in hand the rate of alignment will be exponential just as in the absolute kernel case. For further background on alignment models we again refer the reader to \cite{Rbook}.

\subsection{Schr\"odinger-Lohe Preliminaries}
The original Schr\"odinger-Lohe model \eqref{eq:SL_or}, with identical natural frequencies, $\O_j=\O$ for all $j=1,...,N$,
has been studied extensively, see \cite{AM,CCH,HHK} and the references therein. As for the Kuramoto model, 
it is possible to define an order parameter, given by
\begin{equation}\label{eq:op}
    \zeta=\frac{1}{N}\sum_{j=1}^N\psi_j,
\end{equation}
so that complete synchronization occurs when $\|\zeta\|_2 \to 1$, while $\|\zeta\|_2=0$ represents a symmetric configuration where each $\psi_j$ evolves independently, according to the Hamiltonian operator $H$. Utilizing the order parameter allows for \eqref{eq:SL_or} to be rewritten in a more compact form,
\begin{align}
    i\partial_t \psi_j&=H_j\psi_j+\frac{ik}{2}\left(\zeta-\frac{\langle \zeta,\psi_j\rangle}{\|\psi_j\|_2^2} \psi_j\right).
\end{align}
Further, the order parameter satisfies its own equation.
\begin{align}
    i\partial_t \zeta&=H\zeta+\frac{1}{N}\sum_{j=1}^N\O_j\psi_j+\frac{ik}{2}\left(\zeta-\sum_{j=1}^N\frac{\langle \zeta,\psi_j\rangle}{\|\psi_j\|_2^2} \psi_j\right).
\end{align}
As shown in \cite{HHK}, when all natural frequencies are identical, the $L^2$-norm of the order parameter evolves monotonically. This may be exploited to asymptotically achieve either an unstable bipolar synchronization, or an exponentially stable complete synchronization. The paper \cite{AM}  treats the system with non-identical natural frequencies in the special case of two oscillators. Depending on the relative size of the natural frequencies, $\O=\frac{1}{2}|\O_1-\O_2|$ and the coupling constant $k$, they achieve complete synchronization $\frac{2\O}{k}\leq 1$ or periodic behavior $\frac{2\O}{k}>1$.  Analysis of the ODE system of correlations is what leads to various synchronization results. For more background on the Schr\"odinger-Lohe system we again refer to \cite{AM,CCH,HHK,LH1}.

\subsection{The Models}
As said above, the purpose of our paper is to investigate two variants of the Schr\"odinger-Lohe system, given by Models 1 and 2, see systems \eqref{model1} and \eqref{m2}, respectively. In both Model 1 and Model 2 the masses of each oscillator no longer remains conserved, indeed for Model 1, we get the following equation for the evolution of the masses,
\begin{equation}\label{e:psi2}
\ddt \|\psi_j\|_2^2=k\Re\langle \zeta, \psi_j\rangle(\th_j-1),
\end{equation}
where again $\zeta=\frac{1}{N}\sum_{j=1}^N\psi_j$ is the order parameter defined in \eqref{eq:op}.
From \eqref{e:psi2} we see that it is necessary that $\th_j \to 1$ in order for $\|\psi_j\|_2^2$ to remain finite through any synchronization process.
This is the reason why $\mu>0$ is necessary for Model 1 in order to achieve synchronization, as otherwise some states could indefinitely grow or vanish, depending on the sign of $\theta_j-1$. On the other hand, Model 2 allows for $\mu=0$, see for instance Proposition \ref{ncssynch} below where synchronization occurs also in this case.

For Model 2, we see a different behavior,
\begin{align}
\ddt \|\psi_j\|_2^2=k\Re\langle \zeta, \psi_j \rangle(\th_j-\|\psi_j\|_2^2),
\end{align}
which shows that, whenever synchronization occurs, $\|\psi_j\|_2^2 \to \th_j$, for each $j=1, \dotsc, N$. 
This implies that the parameter $\theta_j$ steers the $j-$th oscillator to reach the desired mass for arbitrarily large times. 
Overall, we see that the inclusion (or lack thereof) of the normalization in these Schr\"odinger-Lohe type models truly leads to drastically different dynamics for the respective models. For Model 2 we can even provide an example of initial conditions that lead to decay of the order parameter, $\|\zeta\|_2  \to  0$, for instance see the example \ref{inc} in Section \ref{sec:ex} below.

For later purposes, it is convenient to decompose the dynamics of each oscillator $\psi_j$ by separately considering the evolution of its size (i.e. its $L^2$ norm), and a renormalized oscillator.
More precisely, we define
\begin{equation*}\label{eq:size}
\lambda_j(t)=\|\psi_j(t)\|_{2},
\end{equation*}
\begin{equation}\label{eq:ren}
\psi_j(t, x)=\lambda_j(t)\phi_j(t, x).
\end{equation}
In this way, the dynamics for the $j-$th oscillator $\psi_j$ can be decomposed as 
\begin{equation}\label{eq:m1_dec}
\begin{aligned}
\dot\lambda_j=&\,\frac{k}{2}\Re\langle \zeta, \phi_j\rangle(\th_j-1),\\
i\partial_t\phi_j=&\,\left(H_j+\frac{k}{2\lambda_j}\Im\langle\zeta,\phi_j\rangle\right)\phi_j
+i\frac{k\theta_j}{2\lambda_j}(\zeta-\Re\langle\zeta,\phi_j\rangle\phi_j),
\end{aligned}
\end{equation}
for Model 1, whereas for Model 2 it reads
\begin{equation}\label{eq:m2_dec}\begin{aligned}
\dot\lambda_j=&\,\frac{k}{2}\Re\langle \zeta, \phi_j \rangle (\th_j-\lambda_j^2),\\
i\partial_t\phi_j=&\,\left(H_j+\frac{k}{2}\l_j\Im\langle\zeta,\phi_j\rangle\right)\phi_j+i\frac{k\th_j}{2\l_j}\left(\zeta-\Re\langle\zeta,\phi_j\rangle\phi_j\right).
\end{aligned}\end{equation}
\begin{remark}\label{rmk:theta}
Let us notice that the dynamics for $\phi_j$ in \eqref{eq:m1_dec} can be thought of as given by sum of two contributions, a conservative and a non-conservative one. It is then clear how $\theta_j$ determines the strength of the non-conservative part, that eventually leads to synchronization. A similar remark holds for \eqref{eq:m2_dec}, too.
\end{remark}

\section{On the Cauchy Problem for Model 1 and Model 2}\label{sec:Cau}
In this section we briefly discuss the existence of local and global solutions for models \eqref{model1} and \eqref{m2}. 
Our paper mainly focuses on the asymptotic behavior of solutions, nevertheless we deem a minimal discussion on well-posedness issues is necessary. Indeed, models \eqref{model1} and \eqref{m2} bear some non-standard aspects with respect to the usual well-posedness results available in the literature, that will be highlighted below. 
For a thorough presentation of the classical results and tools used to study well-posedness problems for nonlinear Schr\"odinger-type equations we refer to \cite{Caz}.

We denote $\Psi=(\psi_1, ..., \psi_N)$, analogously $\Psi_0=(\psi_1^0,..., \psi_N^0)$ or $\Theta = (\theta_1, \dotsc, \theta_N)$.
First of all, we notice that if the $\theta_j$'s in \eqref{model1} and \eqref{m2} were given, then the local well-posedness result could be readily inferred from the self-adjointness of the Hamiltonian operators $H_j$, $j=1, ..., N$ and Stone's  theorem, see for instance \cite[Theorem VII.7]{RS}. Indeed, in that case it is possible to show the existence of solutions to models \eqref{model1} and \eqref{m2} by a standard fixed point argument in $L^2$ based spaces.

On the other hand, the evolution of the intrinsic parameters, given by the Cucker-Smale dynamics, is coupled with the Schr\"odinger equations for $\psi_j$'s via their center of masses, that are not well defined for general $L^2$ functions. Therefore it is necessary to study solutions with more regularity, namely belonging to some weighted spaces. 
For this reason we consider the functional space
\begin{equation*}
\Sigma(\R^d)=\{f\in H^1(\R^d)\,:\int|x|^2|f(x)|^2\,dx<\infty\}.
\end{equation*}
Let us recall that $\Sigma$ is the domain of the quadratic form associated to the hamornic oscillator $H=-\frac12\Delta+\frac{|x|^2}{2}$, i.e.
\begin{equation*}
\Sigma=\{f\in L^2\,:\,\langle f, Hf\rangle<\infty\}.
\end{equation*}
By using the Feynman's path integral approach \cite{Alb, Fuj, Yaj}, see also \cite{Car}, it is possible to define the propagator associated to a large class of Hamiltonian operators $H=-\frac12\Delta+V$, where $V=V(x)$ satisfies the following assumption.
\begin{assumption}\label{ass:V}
We let $V=V_\infty+V_p$, where $V_\infty$ satisfies
\begin{itemize}
\item $V_\infty\in C^\infty(\R^d)$;
\item $|\nabla^\alpha V_\infty(x)|\leq C$, for any $x\in\R^d$, $|\alpha|\geq2$,
\end{itemize}
whereas $V_p$ satisfies
\begin{equation*}
V_p, \nabla V_p\in L^p + L^\infty,\quad\;p\geq1, p>\frac{d}{2}.
\end{equation*}
\end{assumption}
Physically relevant examples covered by the above assumption are the harmonic oscillator $V(x)=\frac{\omega^2}{2}|x|^2$, or any potential $V(x)\sim|x|^{-\gamma}$ that is longer range than the Coulomb interaction $\gamma<1$, or any linear combination of such potentials.
\begin{remark}
The class of admissible potentials could be further extended, for instance by considering time-dependent potentials. Moreover, even the local well-posedness result stated below is not optimal in terms of regularity assumptions on the initial data. However, since our main focus is on the asymptotic behavior of solutions, we prefer to avoid unnecessary technicalities related to the local behavior.
\end{remark}
\begin{proposition}\label{prop:lwp}
Let $\Psi_0\in \Sigma(\R^d;\C^N)$, $\Theta_0\in\R^N_+$ with $\frac1N\sum_{j=1}^N\theta_{j, 0}=1$ and let $H_j=H+\Omega_j$, where $H=-\frac12\Delta+V$ with $V$ satisfying Assumption \ref{ass:V}, and $\Omega_j\in\R$. 
Then there exists a unique maximal solution $\Psi\in C([0, T_{max});\Sigma)$ to model \eqref{model1}, respectively, model \eqref{m2}, such that the following blow-up alternative holds true. Either $T_{max}=\infty$ or $T_{max}<\infty$ and we have
\begin{equation}\label{eq:bu_alt}
\lim_{t\to T_{max}}\sup_{j=1, ..., N}(\|\psi_j(t)\|_{\Sigma}
+\|\psi_j(t)\|_{2}^{-1})=\infty,
\end{equation}    
for Model 1, respectively
\begin{equation*}
\lim_{t\to T_{max}}\sup_{j=1, ..., N}\|\psi_j(t)\|_{\Sigma}
=\infty,
\end{equation*}    
for Model 2.
\end{proposition}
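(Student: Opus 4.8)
The plan is to treat both models simultaneously through a contraction-mapping argument in the Duhamel (mild) formulation, on the product space $X_T=C([0,T];\Sigma(\R^d;\C^N))\times C([0,T];\R^N)$ for the unknown $(\Psi,\Theta)$. The first ingredient is the propagator. Since $\Omega_j$ is a real constant, $e^{-itH_j}=e^{-it\Omega_j}e^{-itH}$, so everything reduces to $e^{-itH}$; by the Feynman path-integral constructions of \cite{Fuj,Yaj,Alb} (see also \cite{Car}) the group generated by $H=-\frac12\Delta+V$ under Assumption \ref{ass:V} leaves $\Sigma$ invariant, with an operator-norm bound $\|e^{-itH}\|_{\Sigma\to\Sigma}\leq C(T)$ uniform for $|t|\leq T$. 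I would record this as the only external input and build the rest on top of it.

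Next I would set up the fixed-point map. Writing the coupling terms of \eqref{model1}, respectively \eqref{m2}, as $F_j(\Psi,\Theta)$, the mild formulation reads $\psi_j(t)=e^{-itH_j}\psi_j^0+\int_0^t e^{-i(t-s)H_j}F_j(\Psi(s),\Theta(s))\,ds$, coupled with the integrated Cucker--Smale law $\theta_j(t)=\theta_{j,0}+\frac{\mu}{N}\int_0^t\sum_k h(|x_j-x_k|)(\theta_k-\theta_j)\,ds$. Here the centers of mass $x_j=\|\psi_j\|_2^{-2}\int x|\psi_j|^2\,dx$ are well defined precisely because $\psi_j\in\Sigma$: Cauchy--Schwarz gives $\int|x|\,|\psi_j|^2\,dx\leq(\int|x|^2|\psi_j|^2\,dx)^{1/2}\|\psi_j\|_2<\infty$, which is exactly why the weighted space $\Sigma$, rather than $L^2$, is forced upon us. I would work on a ball $B\subset X_T$ around the constant trajectory $(\Psi_0,\Theta_0)$ subject to the extra constraint $\|\psi_j(t)\|_2\geq\delta$ for all $j,t$, with $\delta=\tfrac12\min_j\|\psi_j^0\|_2>0$ (positivity of the initial masses being needed for $x_j$ to be defined at all). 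Since $\Sigma\hookrightarrow L^2$, continuity in $t$ keeps the masses near their initial values, so for $T$ small this constraint is preserved by the map.

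The core of the argument is the local Lipschitz estimates on $B$. The term $\theta_j\psi_l$ is bilinear and harmless; the cubic term $\langle\psi_l,\psi_j\rangle\psi_j$ of Model 2 is controlled because $|\langle\psi_l,\psi_j\rangle|\leq\|\psi_l\|_2\|\psi_j\|_2\lesssim\|\psi_l\|_\Sigma\|\psi_j\|_\Sigma$ is a scalar multiplying an element of $\Sigma$. For Model 1 the additional factor $\|\psi_j\|_2^{-2}$ is, on $B$, bounded by $\delta^{-2}$ and Lipschitz in $\psi_j$ away from zero norm, so $\|\psi_j\|_2^{-2}\langle\psi_l,\psi_j\rangle\psi_j$ is again locally Lipschitz $\Sigma^N\to\Sigma^N$. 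Likewise $\Psi\mapsto x_j$ is locally Lipschitz on $\{\|\psi_j\|_2\geq\delta\}$, and composing with $h$ (assumed Lipschitz) makes the Cucker--Smale right-hand side Lipschitz in $(\Psi,\Theta)$. Combining these with the propagator bound and the factor of $T$ produced by the time integrals, the full map is a contraction on $B$ for $T$ small, and the Banach fixed-point theorem yields a unique local solution in $C([0,T];\Sigma)$.

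Finally I would upgrade to a maximal solution by the standard continuation procedure, re-solving from later times as long as the data remain admissible. If $T_{max}<\infty$ the solution cannot be restarted, which forces a loss of admissibility: either $\|\psi_j(t)\|_\Sigma\to\infty$, or---for Model 1 only---the lower bound degenerates, $\|\psi_j(t)\|_2\to0$, since it is exactly the normalization $\|\psi_j\|_2^{-2}$ in \eqref{eq:m1_dec} that can blow up while the $\Sigma$ norms stay finite; this is what the term $\|\psi_j(t)\|_2^{-1}$ records in \eqref{eq:bu_alt}. For Model 2 the nonlinearity in \eqref{eq:m2_dec} is smooth up to $\psi_j=0$ (no inverse powers of the norm), so no lower bound is needed and only the $\Sigma$ norm can blow up. The main obstacle is precisely this interplay between the weighted regularity demanded by the centers of mass and the norm-normalization singularity of Model 1; the $\Sigma$-invariance of the propagator, though technically the deepest fact, is supplied by the cited literature and enters only as a black box.
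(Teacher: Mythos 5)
Your proposal is correct and is essentially the argument the paper has in mind but only sketches in Section \ref{sec:Cau}: a Duhamel fixed point in $\Sigma$-based spaces built on the path-integral propagator, a mass lower bound $\|\psi_j\|_2\geq\delta$ incorporated into the contraction ball for Model 1 (which is exactly what produces the nonstandard term $\|\psi_j\|_2^{-1}$ in the blow-up alternative \eqref{eq:bu_alt}), and a standard continuation argument at the end. One shared imprecision worth flagging: the cited path-integral constructions give the $\Sigma\to\Sigma$ propagator bound only for the smooth subquadratic part $V_\infty$ of the potential, so the rough part $V_p$ should be moved into the Duhamel integral and handled with local-in-time Strichartz estimates (a plain $C([0,T];\Sigma)$ contraction loses a derivative on $(\nabla V_p)\psi_j$ when $d/2<p<d$); this is a routine modification that leaves your structure intact, and the paper elides it at exactly the same point.
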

Let us emphasize that the blow-up alternative for Model 1 stated in \eqref{eq:bu_alt} is not the usual one (see \cite{Caz} for instance), due to the presence of the normalization factor in  \eqref{model1}. Indeed, the possibility for one wave function to vanish at some time would yield non-uniqueness issues in the model. A similar mathematical difficulty is considered also in \cite{ACS}, where a nonlocal renormalization term as in \eqref{model1} appears.
\newline
For this reason, in order to globally extend the solutions we also need a lower bound on the $L^2$ norm of each $\psi_j$. Let us define
\begin{equation}\label{eq:lambda_pm}
\lambda_-(t)=\min_{j=1, ..., N}\|\psi_j(t)\|_{2},\quad
\lambda_+(t)=\max_{j=1, ..., N}\|\psi_j(t)\|_{2}.
\end{equation}
It is possible to check that the following lower bound on $\lambda_-$ holds true,
\begin{align}\label{eq:lb}
    \l_-(t)\geq \l_-(0)-\frac{k}{2}\frac{\l_+(0)}{c}(\th_+-1)(0)\exp\left(\frac{k}{2c}(\th_+-1)(0)\right),
\end{align}
where $\theta_+=\max\{\theta_1, \dotsc, \theta_N\}$, see subsection \ref{QB} and in particular estimate \eqref{eq:lower_bound} for a similar calculation.
In particular the lower bound in \eqref{eq:lb} depends only on the initial data. Consequently, it is possible to prove that, as long as the right hand side of \eqref{eq:lb} is strictly positive, then the solution to model \eqref{model1} can be extended for all times.
Indeed, once a lower bound on the total masses is inferred, it is also possible to analogously derive an upper bound, see for instance estimate \eqref{eq:ub} or Lemma \ref{lem:ub} for similar calculations. By using standard tools for nonlinear Schr\"odinger-type equations (Strichartz estimates, bootstrap arguments), these in turn yield also suitable bounds on the $\Sigma$ norms of the oscillators.
\begin{proposition}\label{prop:gwp1}
Let us consider system \eqref{model1} and let the assumptions of Proposition \ref{prop:lwp} hold true. If in addition we assume that
\begin{equation}\label{eq:lb_c}
\l_-(0)-\frac{k}{2}\frac{\l_+(0)}{c}(\th_+-1)(0)\exp\left(\frac{k}{2c}(\th_+-1)(0)\right)>0,
\end{equation}
where $\lambda_-$ and $\lambda_+$ are defined in \eqref{eq:lambda_pm}, then the solutions to \eqref{model1} constructed in Proposition \ref{prop:lwp} are global in time, namely we have $\Psi\in C([0, \infty);\Sigma)$.
\end{proposition}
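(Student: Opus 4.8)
The plan is to rule out finite-time blow-up by controlling, on every interval $[0,T]$ with $T<T_{max}$, the two quantities appearing in the blow-up alternative \eqref{eq:bu_alt}: the reciprocal mass $\sup_j\|\psi_j(t)\|_2^{-1}$ and the $\Sigma$ norm $\sup_j\|\psi_j(t)\|_\Sigma$. Keeping $\sup_j\|\psi_j\|_2^{-1}$ finite is the same as keeping $\l_-(t)$ bounded away from $0$, and this is precisely what hypothesis \eqref{eq:lb_c} is engineered to guarantee through the a priori bound \eqref{eq:lb}; the $\Sigma$ bound is then a Strichartz/bootstrap estimate that the mass bounds make available. Throughout I use that symmetry of $h$ preserves $\bar{\th}=1$ and that, for the absolute kernel (so that the rate $c$ entering \eqref{eq:lb_c} is the one from \eqref{align}), the alignment estimate holds independently of the center-of-mass configuration.

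First I would establish the two-sided mass bounds. From the renormalised dynamics \eqref{eq:m1_dec}, together with $\|\phi_j\|_2=1$, Cauchy--Schwarz and $\|\zeta\|_2\le\l_+$, one has $|\dot\l_j|\le\tfrac{k}{2}\l_+|\th_j-1|$, and hence for the upper envelope
\[
\frac{d}{dt}\l_+(t)\le\frac{k}{2}\,\l_+(t)\,\max_j|\th_j-1|(t).
\]
Because the absolute kernel gives exponential alignment \eqref{align}, the quantity $\max_j|\th_j-1|$ is integrable in time, with total integral controlled by $(\th_+-1)(0)/c$, so Gr\"onwall's inequality yields the time-uniform bound $\l_+(t)\le\l_+(0)\exp\!\big(\tfrac{k}{2c}(\th_+-1)(0)\big)$. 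Inserting this into $\dot\l_j\ge-\tfrac{k}{2}\l_+\max_j|\th_j-1|$ and integrating reproduces exactly the lower bound \eqref{eq:lb}; under \eqref{eq:lb_c} its right-hand side is a strictly positive constant, uniform in $t$, which controls $\sup_j\|\psi_j(t)\|_2^{-1}$ for all time.

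It then remains to propagate the $\Sigma$ regularity. With $0<\l_-\le\l_j(t)\le\l_+<\infty$ secured, the coupling in \eqref{model1} is harmless: the term $\th_j\psi_l$ is linear with bounded coefficient, while the renormalisation $\langle\psi_l,\psi_j\rangle\|\psi_j\|_2^{-2}\psi_j$ is multiplication of $\psi_j$ by the scalar $\langle\psi_l,\psi_j\rangle/\l_j^2$, of modulus at most $\l_+^2/\l_-^2$. Writing the Duhamel formula against the propagators $U_j(t)=e^{-it\Omega_j}U_H(t)$ generated by $H_j$ --- which are well defined and map $\Sigma$ into $\Sigma$ with locally bounded growth for potentials obeying Assumption \ref{ass:V}, via the Feynman path-integral construction of \cite{Fuj,Yaj} --- and combining the $\Sigma$-boundedness of $U_H$ with Strichartz estimates, I would bound $\sum_j\|\psi_j(t)\|_\Sigma$ by its initial value plus a time integral of itself with coefficients controlled by $\l_\pm$. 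A Gr\"onwall/bootstrap argument then closes the estimate on $[0,T]$, with at worst exponential-in-$T$ growth, so neither term in \eqref{eq:bu_alt} can diverge at a finite $T_{max}$.

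The main obstacle is the $\Sigma$ step: one must verify that the weighted space $\Sigma$, which also tracks $\||x|\psi_j\|_2$, is preserved by $U_H$ with time-integrable corrections so that the bootstrap closes, and control the nonlocal renormalisation in this norm. This is exactly where the lower bound on $\l_-$ is indispensable, since it converts the a priori singular factor $\|\psi_j\|_2^{-2}$ into a bounded multiplier and thereby demotes the renormalisation to a bounded linear perturbation, after which the $\Sigma$ estimate is routine. The genuinely new input beyond standard nonlinear Schr\"odinger theory is thus the mass lower bound of the second step, which explains why \eqref{eq:lb_c} --- and implicitly the absolute-kernel hypothesis --- is the essential assumption.
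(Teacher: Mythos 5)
Your proposal is correct and follows essentially the same route as the paper: exponential alignment of the $\theta_j$'s under the absolute kernel gives a Gr\"onwall upper bound on $\lambda_+$, which is fed into the differential inequality for the smallest mass to reproduce the lower bound \eqref{eq:lb}, so that hypothesis \eqref{eq:lb_c} keeps $\sup_j\|\psi_j(t)\|_2^{-1}$ finite, after which the $\Sigma$ bound is closed by the same Duhamel/Strichartz bootstrap the paper invokes before applying the blow-up alternative \eqref{eq:bu_alt}. The only difference is presentational: you spell out the order of the mass estimates (upper bound first, then lower bound), which the paper delegates to the analogous calculations of subsection \ref{QB}.
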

On the contrary, as system \eqref{m2} does not involve the renormalization factor, existence of global solutions is guaranteed by simply providing uniform bounds on the $\Sigma$ norm over compact time intervals.
\begin{proposition}\label{prop:gwp2}
Let the assumptions of Proposition \ref{prop:lwp} hold true. In addition, let $\phi_j$ be defined by \eqref{eq:ren} and let us assume that either
    \begin{equation}\label{eq:c1}
\Re\langle \phi_j,\phi_k\rangle(0)\geq 0, \ \forall \  j,k=1,,,N,
    \end{equation}
    or
    \begin{equation}\label{eq:c2}
    \|\psi_{j}^0\|_2^2\leq \th_j, \ \forall \   j=1,..,N.
    \end{equation}
Then, the solutions to model \eqref{m2} constructed in Proposition \ref{prop:lwp} are global in time, namely $\Psi\in C([0, \infty);\Sigma)$.
\end{proposition}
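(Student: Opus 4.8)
The plan is to rule out the blow-up alternative of Proposition~\ref{prop:lwp} for Model 2, i.e. to show that on every compact interval $[0,T]$ one has $\sup_{t\in[0,T]}\sup_j\|\psi_j(t)\|_\Sigma<\infty$. Since system \eqref{m2} contains no renormalization by $\|\psi_j\|_2^2$, no \emph{lower} bound on the masses is needed here: the centers of mass $x_j$ enter only through $h(|x_j-x_k|)$, and because $h\ge0$ the Cucker-Smale protocol obeys the usual maximum principle, so that $\theta_j(t)\in[\theta_-(0),\theta_+(0)]$ for all $t$, irrespective of the $x_j$. Thus the scalar parameters $\theta_j$ are controlled for free, and the whole problem reduces to an a priori \emph{upper} bound on $\lambda_j=\|\psi_j\|_2$, followed by a standard propagation of this bound to the full $\Sigma$ norm.

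The heart of the argument is the upper bound on the masses. Recall that $\ddt\lambda_j^2=k\Re\langle\zeta,\psi_j\rangle(\theta_j-\lambda_j^2)$; since $|\Re\langle\zeta,\psi_j\rangle|\le\|\zeta\|_2\lambda_j\le\lambda_+^2$, the naive estimate is a Riccati inequality $\ddt\lambda_+^2\lesssim\lambda_+^4$ that permits finite-time blow-up, and it is precisely this growth that conditions \eqref{eq:c1} and \eqref{eq:c2} must defeat. I would treat the two hypotheses separately. Under \eqref{eq:c2} set $g_j:=\lambda_j^2-\theta_j$; when $\mu=0$ the parameters are frozen and the mass equation becomes the \emph{linear} ODE $\dot g_j=-k\Re\langle\zeta,\psi_j\rangle\,g_j$, whence $g_j(t)=g_j(0)\exp\!\big(-k\int_0^t\Re\langle\zeta,\psi_j\rangle\ds\big)$ keeps the sign of $g_j(0)\le0$, so $\lambda_j^2(t)\le\theta_j\le\theta_+(0)$ for all $t$, \emph{regardless} of the sign of the correlation. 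For general $\mu>0$ the extra term $-\dot\theta_j$ is bounded and is absorbed exactly as in the lower-bound computation \eqref{eq:lb}, i.e. by the quantitative estimate of Lemma~\ref{lem:ub}, giving a bound on $\lambda_+$ depending only on the initial data and $T$. Under \eqref{eq:c1} I would instead use positivity of the correlations: $\Re\langle\phi_j,\phi_k\rangle\ge0$ gives $\Re\langle\zeta,\psi_j\rangle=\tfrac1N\sum_l\lambda_l\lambda_j\Re\langle\phi_l,\phi_j\rangle\ge0$, so at an index $j^*$ realizing $\lambda_+^2$ and once $\lambda_+^2\ge\theta_+(0)\ge\theta_{j^*}$ one has $\ddt\lambda_+^2=k\Re\langle\zeta,\psi_{j^*}\rangle(\theta_{j^*}-\lambda_+^2)\le0$; a barrier/continuity argument then yields $\lambda_+^2\le\max\{\lambda_+^2(0),\theta_+(0)\}$.

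With $\theta_j$ and $\lambda_j$ bounded on $[0,T]$, the synchronization term $\tfrac{ik}{2}(\theta_j\zeta-\langle\zeta,\psi_j\rangle\psi_j)$ in \eqref{m2} is an at-most-cubic nonlinearity with coefficients that are now uniformly bounded. I would close the estimate by writing $\psi_j$ through the Duhamel formula against the propagator $e^{-itH_j}$ generated by $H_j=H+\Omega_j$ — well defined, together with its Strichartz and local smoothing estimates, by the Feynman path-integral construction under Assumption~\ref{ass:V} — and running a bootstrap in the $\Sigma$ norm, propagating the weight $|x|^2$ and the gradient by commuting them with the propagator as in the standard theory for Schr\"odinger equations with confining potentials (cf. \cite{Caz}). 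A Gr\"onwall argument on $[0,T]$ then bounds $\sup_j\|\psi_j(t)\|_\Sigma$, contradicting the blow-up alternative and giving $\Psi\in C([0,\infty);\Sigma)$.

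The step I expect to be the main obstacle is the persistence of the sign of $\Re\langle\zeta,\psi_j\rangle$ under \eqref{eq:c1} for \emph{non-identical} oscillators. Computing $\ddt\langle\psi_j,\psi_k\rangle$ one finds the frustration term $i(\Omega_j-\Omega_k)\langle\psi_j,\psi_k\rangle$, whose real part $-(\Omega_j-\Omega_k)\Im\langle\psi_j,\psi_k\rangle$ has no definite sign, so the wedge condition $\Re\langle\phi_j,\phi_k\rangle\ge0$ is not exactly conserved; one must then either restrict to the regime where this positivity can be propagated (e.g. identical $\Omega_j$, as in \cite{HHK}) or quantify how long it survives and combine this with the quantitative mass bounds. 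By contrast, the passage from the scalar bounds to the $\Sigma$ norm via Strichartz estimates is entirely routine.
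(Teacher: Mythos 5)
Your overall strategy coincides with the paper's: deduce an a priori upper bound on the masses $\lambda_j$ from \eqref{eq:c1} or \eqref{eq:c2}, then propagate it to the $\Sigma$ norm by Duhamel/Strichartz bootstrapping and invoke the blow-up alternative of Proposition~\ref{prop:lwp} (which for Model 2 indeed contains no $\|\psi_j\|_2^{-1}$ term, so you are right that no lower bound is needed). Your two working mechanisms also match the paper's: under \eqref{eq:c1}, preservation of the wedge gives $\Re\langle\zeta,\phi_j\rangle\geq0$ and a barrier at $\lambda_+^2=\max\{\lambda_+^2(0),\theta_+(0)\}$, which is the computation behind Proposition~\ref{mod2sync}; under \eqref{eq:c2} with $\mu=0$, the sign of $g_j=\lambda_j^2-\theta_j$ is preserved because $g_j$ solves a linear homogeneous ODE, which is exactly the argument in Proposition~\ref{ncssynch}. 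Your closing remark about the frustration term $i(\Omega_j-\Omega_k)\langle\psi_j,\psi_k\rangle$ is also well taken: the paper only proves wedge preservation for identical oscillators ($\Omega_j=0$, as assumed from Section 4 onward), so the restriction you propose is what the paper implicitly does.

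The genuine gap is your treatment of \eqref{eq:c2} with $\mu>0$. You claim the perturbation $-\dot\theta_j$ in $\dot g_j=-k\Re\langle\zeta,\psi_j\rangle g_j-\dot\theta_j$ is ``absorbed exactly as in'' \eqref{eq:lb} and Lemma~\ref{lem:ub}. That absorption cannot work here. Lemma~\ref{lem:ub} is a Model 1 estimate in which the mass equation is \emph{linear} in $\lambda_+$, namely $|\ddt\lambda_+|\leq\frac{k}{2}\lambda_+|\theta_+-1|$, with a coefficient decaying exponentially by \eqref{align}; Gr\"onwall then closes. In Model 2 the invariant region $\{\lambda_j^2\leq\theta_j\}$ can be exited as soon as $\mu>0$: at $g_j=0$ one has $\dot g_j=-\dot\theta_j$, which is positive whenever $\theta_j$ is decreasing (e.g.\ when $\theta_j$ is the largest parameter). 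Once outside, you have no sign information on $\Re\langle\zeta,\psi_j\rangle$, and the only available estimate is the Riccati-type inequality you yourself flagged as the enemy, $\ddt g_+\leq k\bigl(g_++\theta_+(0)\bigr)g_+ + \mu h(0)\max_{k,l}|\theta_k-\theta_l|(0)$; the comparison solution starting from $g_+(0)\leq0$ is pushed to order one by the bounded forcing and then blows up in finite time, so neither Gr\"onwall nor comparison yields a bound on $[0,T]$. Something genuinely different would be needed in this regime (an invariant region robust under the $\dot\theta_j$ perturbation, or a mechanism recovering $\Re\langle\zeta,\psi_j\rangle\geq0$ before $g_+$ can grow). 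In fairness, the paper's one-line proof glosses over exactly this case as well — its detailed argument under \eqref{eq:c2} is carried out only for frozen parameters — so your proof is complete precisely in the cases the paper actually substantiates: \eqref{eq:c1} with identical frequencies, and \eqref{eq:c2} with $\mu=0$.
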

Even for Model 2, it is quite standard to prove that conditions \eqref{eq:c1} or \eqref{eq:c2} imply upper bounds on the $\lambda_j$'s, which in turn provide suitable bounds also for the $\Sigma$ norms.
\newline
In order to study the asymptotic behavior of the models under our consideration, we will make use of Barbalat's Lemma.
\begin{lemma}\label{BL}
    If $f(t)$ has a finite limit as $t\to \infty$, and if $\ddt f(t)$ is uniformly continuous, then $\ddt f(t) \to 0$ as $t \to \infty$.
\end{lemma}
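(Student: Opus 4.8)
The plan is to argue by contradiction, exploiting the fact that a uniformly continuous function cannot exhibit spikes that are simultaneously tall in amplitude and negligible in integral. Suppose, contrary to the conclusion, that $\ddt f(t)$ does not tend to $0$ as $t\to\infty$. Then there exist $\varepsilon>0$ and a sequence $t_n\to\infty$ such that $\left|\ddt f(t_n)\right|\geq\varepsilon$ for every $n$. The goal is to turn this sequence of ``bad times'' into a non-vanishing oscillation of $f$ itself, which will clash with the assumption that $f$ converges.

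The next step is to invoke the uniform continuity of $\ddt f$. It furnishes a single $\delta>0$, independent of $n$, such that $\left|\ddt f(t)-\ddt f(s)\right|<\varepsilon/2$ whenever $|t-s|<\delta$. In particular, on each interval $[t_n,t_n+\delta]$ the derivative stays within $\varepsilon/2$ of its value $\ddt f(t_n)$; since $\left|\ddt f(t_n)\right|\geq\varepsilon$, this forces $\left|\ddt f(t)\right|>\varepsilon/2$ with the \emph{same sign} as $\ddt f(t_n)$ throughout the whole interval. Consequently there is no cancellation in the integral, and
\[
\left|\int_{t_n}^{t_n+\delta}\ddt f(t)\,dt\right|=\int_{t_n}^{t_n+\delta}\left|\ddt f(t)\right|\,dt\geq\frac{\varepsilon\delta}{2}.
\]

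Finally I would combine the fundamental theorem of calculus with the hypothesis that $f$ has a finite limit. The left-hand side above is exactly $|f(t_n+\delta)-f(t_n)|$. If $f(t)\to L$ as $t\to\infty$, then both $f(t_n+\delta)\to L$ and $f(t_n)\to L$ as $n\to\infty$, so $f(t_n+\delta)-f(t_n)\to 0$; this contradicts the uniform lower bound $\varepsilon\delta/2>0$ established in the previous display. The contradiction shows that no such $\varepsilon$ and sequence can exist, whence $\ddt f(t)\to 0$ as $t\to\infty$.

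There is no substantial obstacle in this argument; the only point requiring genuine care is the sign-preservation step, and it is precisely there that \emph{uniform} continuity (as opposed to mere continuity) is essential. Uniform continuity guarantees that the width $\delta$ of the intervals on which the derivative keeps a definite sign can be chosen once and for all, independently of $n$. Were $\delta$ allowed to shrink to $0$ along the sequence $t_n\to\infty$, the accumulated contribution $\varepsilon\delta/2$ could degenerate and the contradiction would collapse; uniformity is exactly what prevents this.
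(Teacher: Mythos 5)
Your proof is correct: it is the standard contradiction argument for Barbalat's lemma, using uniform continuity to produce intervals of fixed width $\delta$ on which the derivative keeps a definite sign and magnitude at least $\varepsilon/2$, and then contradicting the Cauchy property of the convergent function $f$. The paper itself states this classical lemma without proof, so there is nothing to compare against; your argument is the canonical one found in the literature. The only cosmetic point is at the right endpoint $t_n+\delta$, where uniform continuity gives the bound only for $|t-s|<\delta$; this is harmless (work on $[t_n,t_n+\delta/2]$, or note the strict bound on the half-open interval suffices for the integral estimate) and does not affect the validity of the proof.
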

\section{Two Oscillators}\label{sec:two}
In this section we focus on the case of two oscillators. Since for $N=2$ systems \eqref{model1} and \eqref{m2} behave in a sufficiently similar fashion, we present their qualitative properties simultaneously.
\newline
By recalling the decomposition \eqref{eq:ren} for each oscillator $\psi_j$, we infer synchronization properties of both systems by analizying the asymptotic behavior of the correlations $\Re\langle\phi_1, \phi_2\rangle$. Let us remark that by definition we have
\begin{equation*}
|\Re\langle\phi_1, \phi_2\rangle|\leq1
\end{equation*}
and
\begin{equation*}
\|\phi_1-\phi_2\|_{2}^2=2\left(1-\Re\langle\phi_1, \phi_2\rangle\right),
\end{equation*}
so that showing synchronization for $\phi_1$ and $\phi_2$ is equivalent to showing that
\begin{equation*}
\lim_{t\to\infty}\Re\langle\phi_1, \phi_2\rangle=1.
\end{equation*}
\begin{theorem}\label{synch2}
Let $N=2$ and let the assumptions of Proposition \ref{prop:gwp1} (Proposition \ref{prop:gwp2}, respectively) be satisfied. If we further assume that
\begin{equation*}
\Re\langle\phi_{1, 0}, \phi_{2, 0}\rangle\neq-1
\end{equation*}
and that the symmetric communication kernel $h(r)$ satisfies the Heavy Tail condition, then solutions to system \eqref{model1} (system \eqref{m2}, respectively) undergo complete synchronization at exponential rate, namely
\begin{equation*}
1-\Re\langle\phi_1, \phi_2\rangle\lesssim e^{-ct},\quad\textrm{for} \ \,t\to\infty.
\end{equation*}
Further, the centers of mass,  $x_1,x_2$, aggregate exponentially in time,
\begin{equation*}
    |x_1-x_2|\lesssim e^{-Ct}.
\end{equation*}
\end{theorem}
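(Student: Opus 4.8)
\emph{The plan} is to reduce both statements to the scalar correlation $c_{12}(t):=\Re\langle\phi_1,\phi_2\rangle$, as suggested by the identity $\|\phi_1-\phi_2\|_2^2=2(1-c_{12})$ recorded above. First I would differentiate $\langle\phi_1,\phi_2\rangle$, inserting the decomposed dynamics \eqref{eq:m1_dec} (resp.\ \eqref{eq:m2_dec}) for $\partial_t\phi_j$ and writing $\zeta=\tfrac12(\lambda_1\phi_1+\lambda_2\phi_2)$. Since $H$ is self-adjoint and the oscillators are identical, the purely Hamiltonian contributions cancel, $i(\langle H\phi_1,\phi_2\rangle-\langle\phi_1,H\phi_2\rangle)=0$. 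The remaining terms, after taking real parts and simplifying with $\langle\zeta,\phi_2\rangle=\tfrac12(\lambda_1\langle\phi_1,\phi_2\rangle+\lambda_2)$ and the analogous expressions, should collapse to
\begin{equation*}
\ddt\,\Re\langle\phi_1,\phi_2\rangle
=\frac{k}{4}\Big(\frac{\theta_1\lambda_2}{\lambda_1}+\frac{\theta_2\lambda_1}{\lambda_2}\Big)\big(1-c_{12}^2\big)
+\frac{k}{4}\,q\,\big(\Im\langle\phi_1,\phi_2\rangle\big)^2,
\end{equation*}
with $q=\tfrac{\lambda_1}{\lambda_2}+\tfrac{\lambda_2}{\lambda_1}$ for Model 1 and $q=2\lambda_1\lambda_2$ for Model 2. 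Crucially, the coefficient in front of $(1-c_{12}^2)$ is \emph{the same} for the two models (the normalization only affects the conservative $\Im$-term), which is why they can be handled simultaneously.

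\emph{Monotonicity and rate.} Both summands on the right are nonnegative once the $\theta_j$ stay positive, so $c_{12}$ is nondecreasing and bounded by $1$. The consensus structure keeps each $\theta_j$ between $\min_l\theta_{l,0}>0$ and $\max_l\theta_{l,0}$, so $\theta_j(t)\geq\theta_{\min}:=\min_l\theta_{l,0}>0$, and AM--GM gives $\tfrac{\theta_1\lambda_2}{\lambda_1}+\tfrac{\theta_2\lambda_1}{\lambda_2}\geq2\sqrt{\theta_1\theta_2}\geq2\theta_{\min}$, a lower bound \emph{independent of the a priori unknown mass ratio} (only $\lambda_j\in(0,\infty)$, i.e.\ global existence from Proposition \ref{prop:gwp1}/\ref{prop:gwp2}, is used). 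Hence $\ddt c_{12}\geq \tfrac{k}{2}\theta_{\min}(1-c_{12})(1+c_{12})$. The hypothesis $\Re\langle\phi_{1,0},\phi_{2,0}\rangle\neq-1$ together with monotonicity forces $1+c_{12}\geq1+c_{12}(0)>0$ for all times, so $\ddt(1-c_{12})\leq-\tfrac{k}{2}\theta_{\min}(1+c_{12}(0))(1-c_{12})$, and Gr\"onwall yields $1-\Re\langle\phi_1,\phi_2\rangle\lesssim e^{-ct}$, the first claim. Note that this part does not even require the Heavy Tail hypothesis.

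\emph{Aggregation of the centers of mass.} Because the normalization cancels, $x_j=\int_{\R^d}x|\phi_j|^2\dx$, and since $|\phi_1|^2-|\phi_2|^2=\Re[(\phi_1-\phi_2)\overline{(\phi_1+\phi_2)}]$,
\begin{equation*}
x_1-x_2=\int_{\R^d}x\,\Re\big[(\phi_1-\phi_2)\overline{(\phi_1+\phi_2)}\big]\dx,
\qquad
|x_1-x_2|\lesssim\Big(\int_{\R^d}|x|^2|\phi_1+\phi_2|^2\dx\Big)^{1/2}\|\phi_1-\phi_2\|_2 .
\end{equation*}
As $\|\phi_1-\phi_2\|_2=\sqrt{2(1-c_{12})}\lesssim e^{-ct/2}$, it suffices to control the weighted second moment $\int|x|^2|\phi_1+\phi_2|^2\dx$ uniformly in time, i.e.\ the $\Sigma$ norms, in order to deduce $|x_1-x_2|\lesssim e^{-Ct}$.

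\emph{Where the work (and the Heavy Tail hypothesis) sits.} The delicate point is precisely this uniform-in-time bound on the weighted moments, and, for Model 1, on the masses $\lambda_j$ from above and below. I would close it by a bootstrap: the exponential decay of $1-c_{12}$ already makes $\cD(t)=|x_1-x_2|$ bounded; with $\cD$ bounded, the Heavy Tail condition guarantees $h(\cD(t))\geq h(\sup_s\cD(s))>0$, so the $N=2$ consensus equation $\ddt(\theta_1-\theta_2)=-\mu h(|x_1-x_2|)(\theta_1-\theta_2)$ gives $\theta_j\to1$ exponentially; for Model 1 this makes $\ddt\lambda_j^2=k\Re\langle\zeta,\psi_j\rangle(\theta_j-1)$ integrable in time, pinning $\lambda_j$ between two positive constants, which then feeds the energy/Strichartz estimates underlying the uniform $\Sigma$ bound (for Model 2 the masses are controlled already through Proposition \ref{prop:gwp2}). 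Disentangling this mutual dependence—moments for the $x_j$ estimate, boundedness of $\cD$ for alignment, alignment for the mass bounds, mass bounds for the moments—is the main obstacle, and it is exactly the Heavy Tail assumption that renders the loop closeable.
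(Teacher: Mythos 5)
Your derivation of the correlation ODE matches the paper's (both models share the coefficient $\tfrac{k}{4}\bigl(\tfrac{\theta_1\lambda_2}{\lambda_1}+\tfrac{\theta_2\lambda_1}{\lambda_2}\bigr)$ in front of $1-c_{12}^2$), but your treatment of it is genuinely different from, and for the synchronization claim cleaner than, the paper's. The paper proceeds in stages: monotonicity of $\Re\langle\phi_1,\phi_2\rangle$ plus Barbalat's lemma give only the qualitative limit $\Re\langle\phi_1,\phi_2\rangle\to1$; this is used to deduce aggregation of the centers of mass, hence $h(|x_1-x_2|)\geq m>0$ and exponential alignment $\theta_j\to1$; alignment then yields two-sided mass bounds (via Rademacher's lemma applied to $\lambda_+$), and only after all of that does the paper bootstrap the differential inequality, on a time interval $t>T$, to get the exponential rate. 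Your observation that
\begin{equation*}
\frac{\theta_1\lambda_2}{\lambda_1}+\frac{\theta_2\lambda_1}{\lambda_2}\;\geq\;2\sqrt{\theta_1\theta_2}\;\geq\;2\min_l\theta_{l,0}>0
\end{equation*}
short-circuits this entire chain: the lower bound is independent of the mass ratio, consensus keeps each $\theta_j$ in the initial hull, and monotonicity keeps $1+c_{12}\geq1+c_{12}(0)>0$, so Gr\"onwall gives the exponential rate from $t=0$ with an explicit constant depending only on $k$, $\min_l\theta_{l,0}$ and $1+c_{12}(0)$ --- no Barbalat, no alignment, no mass bounds, and, as you correctly note, no Heavy Tail hypothesis for this part. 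That is a real simplification; the paper instead extracts the coefficient bound from the mass bounds it has already labored to establish, and its rate is only asymptotic.

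On the aggregation claim, however, your proposal and the paper share the same unclosed step, and you should be aware that your honesty about it does not yet resolve it. The paper simply asserts that $\phi_j\to\zeta/\|\zeta\|_2$ in $L^2$ ``gives'' $x_j\to\frac{1}{\|\zeta\|_2^2}\int x|\zeta|^2\dx$; this implication is false without uniform control of weighted moments, since $L^2$-closeness says nothing about centers of mass (an $L^2$-mass $\varepsilon$ escaping to distance $\varepsilon^{-3}$ shifts the center of mass by $\varepsilon^{-1}$). Your Cauchy--Schwarz identity makes the needed ingredient explicit --- a uniform-in-time bound on $\int|x|^2|\phi_1+\phi_2|^2\dx$ --- and, granted that bound, it would actually upgrade the paper's asserted qualitative aggregation to the stated exponential rate. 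But your closing bootstrap is circular as sketched: the boundedness of $\cD(t)=|x_1-x_2|$, which you use to start the loop, is itself a consequence of the moment bound you are trying to produce, and the Heavy Tail hypothesis cannot supply uniform-in-time $\Sigma$ bounds --- that is a dispersive-PDE estimate (virial/Strichartz-type, sensitive to the choice of $V$ in Assumption \ref{ass:V}), not a consensus estimate. So on the second claim your argument sits at the same level of rigor as the paper's: the dependence structure is laid out more transparently, but the loop is not closed in either proof.
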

\begin{corollary}\label{bs}
Under the same assumptions of Theorem \ref{synch2}, the following bounds hold true.
For Model 1, there exist constants $C_1, C_2>0$, depending only on $(\Psi_0, \Theta_0)$, such that
\begin{equation}\label{eq:mass_bd}
C_1\leq\lambda_j(t)\leq C_2, \quad\textrm{for any}\,j=1, 2,\,t>0.
\end{equation}
For Model 2, we have
\begin{equation*}
|\lambda_j^2(t)-\theta_j(t)|\lesssim e^{-ct}, \quad\textrm{as}\,t\to\infty.
\end{equation*}
\end{corollary}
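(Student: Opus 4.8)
The plan is to feed the two exponential statements of Theorem \ref{synch2}, the synchronization $1-\Re\langle\phi_1,\phi_2\rangle\lesssim e^{-ct}$ and the aggregation $|x_1-x_2|\lesssim e^{-Ct}$, into the mass equations. A preliminary step common to both models is to upgrade the alignment of the parameters. Since $\cD(t)=|x_1-x_2|(t)$ is bounded (indeed it vanishes), the a priori Heavy Tail inequality $\frac{d}{dt}\max_j|\theta_j-\bar\theta|\lesssim-\mu h(\cD(t))\max_j|\theta_j-\bar\theta|$ closes: $h(\cD(t))$ is bounded below by a positive constant, so that $\max_j|\theta_j-1|\lesssim e^{-ct}$, and plugging this back into the Cucker--Smale law (with $h$ bounded on the relevant range) yields $|\dot\theta_j|\lesssim e^{-ct}$ as well. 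When $\mu=0$ the $\theta_j$ are frozen and these bounds are trivial.

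For Model 1 the lower bound is already furnished by the global theory: under the standing hypothesis \eqref{eq:lb_c} of Proposition \ref{prop:gwp1}, the right-hand side of \eqref{eq:lb} is a strictly positive constant depending only on the data, i.e. $\lambda_-(t)\geq C_1>0$ uniformly in $t$. For the upper bound I would use the renormalized mass equation $\dot\lambda_j=\frac{k}{2}\Re\langle\zeta,\phi_j\rangle(\theta_j-1)$ coming from \eqref{e:psi2}, together with $|\Re\langle\zeta,\phi_j\rangle|\leq\|\zeta\|_2\leq\lambda_+$ (recall $\|\phi_j\|_2=1$). This bounds the upper Dini derivative of $\lambda_+=\max_j\lambda_j$ by $\frac{d}{dt}\lambda_+\leq\frac{k}{2}\lambda_+\max_j|\theta_j-1|$, and Gronwall's inequality with the integrable bound $\max_j|\theta_j-1|\lesssim e^{-ct}$ gives $\lambda_+(t)\leq\lambda_+(0)\exp\!\big(\tfrac{k}{2}\int_0^\infty\max_j|\theta_j-1|\,ds\big)=:C_2<\infty$, proving \eqref{eq:mass_bd}.

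For Model 2 I would monitor the deficit $u_j:=\lambda_j^2-\theta_j$. Using $\frac{d}{dt}\lambda_j^2=k\lambda_j\Re\langle\zeta,\phi_j\rangle(\theta_j-\lambda_j^2)$ it satisfies the scalar linear ODE $\dot u_j=-a_j(t)u_j-\dot\theta_j$ with $a_j(t):=k\lambda_j\Re\langle\zeta,\phi_j\rangle$. If one can show $a_j(t)\geq c_0>0$ for $t\geq t_0$, then the integrating-factor formula $u_j(t)=u_j(t_0)e^{-\int_{t_0}^t a_j}-\int_{t_0}^t e^{-\int_s^t a_j}\dot\theta_j(s)\,ds$ forces $|u_j(t)|\lesssim e^{-c_0(t-t_0)}+\int_{t_0}^t e^{-c_0(t-s)}|\dot\theta_j(s)|\,ds\lesssim e^{-\tilde c\,t}$, which is exactly $|\lambda_j^2-\theta_j|\lesssim e^{-ct}$.

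The crux of the Model 2 case, and the step I expect to be the main obstacle, is the uniform lower bound $a_j\geq c_0$, i.e. keeping both $\lambda_j$ and $\Re\langle\zeta,\phi_j\rangle$ away from zero. For the correlation, exponential synchronization yields a time $t_0$ after which $\Re\langle\phi_1,\phi_2\rangle\geq\tfrac12$, whence $\Re\langle\zeta,\phi_j\rangle=\tfrac12\big(\lambda_j+\lambda_{j'}\Re\langle\phi_j,\phi_{j'}\rangle\big)\geq\tfrac12\lambda_j$ for $t\geq t_0$. For $\lambda_j$ itself I would exploit the attracting structure of its own equation: for $t\geq t_0$ the coefficient $k\lambda_j\Re\langle\zeta,\phi_j\rangle$ is positive, so whenever $\lambda_j^2<\theta_j$ one has $\frac{d}{dt}\lambda_j^2\geq0$; since the Cucker--Smale flow keeps $\theta_j(t)\geq\min_i\theta_{i}(0)>0$, the quantity $\lambda_j^2$ is trapped above $\min\!\big(\lambda_j^2(t_0),\min_i\theta_i(0)\big)>0$, giving $\lambda_j\geq C_1$ and hence $a_j\geq c_0>0$ on $[t_0,\infty)$. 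Positivity and boundedness of $\lambda_j$ on the transient interval $[0,t_0]$ follow from continuity together with the global existence provided by Proposition \ref{prop:gwp2} (under \eqref{eq:c1} or \eqref{eq:c2}). Reconciling the sign of the correlation on this transient window, where synchronization has not yet taken hold, is the only genuinely delicate bookkeeping.
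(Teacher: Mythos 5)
Your proposal is correct and follows essentially the same route as the paper: exponential aggregation from Theorem \ref{synch2} gives $h(|x_1-x_2|)\geq m>0$ and hence exponential alignment of the $\th_j$; the Model 1 upper bound is the same Gronwall estimate as \eqref{eq:ub}; the Model 1 lower bound is exactly the bound \eqref{eq:lb} under hypothesis \eqref{eq:lb_c}, which the paper re-derives by the case analysis of subsection \ref{QB}; and the Model 2 claim is the same Duhamel/integrating-factor argument applied to $\th_j-\l_j^2$ with exponentially decaying forcing $\dot\th_j$. The one point where you are more careful than the paper is the uniform positive lower bound on $\l_j$ needed to keep the damping coefficient $k\l_j\Re\langle\zeta,\phi_j\rangle$ away from zero in Model 2: the paper simply asserts $\Re\langle\zeta,\phi_j\rangle\geq c>0$ after synchronization (implicitly assuming $\l_j$ stays bounded below), whereas your trapping argument, that $\l_j^2$ cannot drop below $\min\bigl(\l_j^2(t_0),\min_i\th_i(0)\bigr)$ once $\Re\langle\zeta,\phi_j\rangle\geq0$, actually supplies the missing justification.
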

\begin{remark}
The constants $C_1, C_2$ appearing in estimate \eqref{eq:mass_bd} are explicitly determined in subsection \ref{QB}.
\end{remark}
The rest of this section is dedicated to proving Theorem \ref{synch2} and Corollary \ref{bs}.
\subsection{Synchronization}
First we will achieve synchronization of directions via a monotonicity argument. Let us compute,
\begin{align}
(\text{Model} \ 1) \ \ \ddt \Re\langle \phi_1,\phi_2\rangle&=\frac{k\th_1}{2\l_1}(\Re\langle \zeta,\phi_2\rangle-\Re\langle \zeta,\phi_1\rangle \Re\langle \phi_1,\phi_2 \rangle)+\frac{k}{2\l_1}\Im \langle \phi_1,\zeta\rangle\Im\langle\phi_1,\phi_2\rangle, \nonumber\\
&+\frac{k\th_2}{2\l_2}(\Re\langle \phi_1,\zeta\rangle-\Re\langle \zeta,\phi_2\rangle \Re\langle \phi_1,\phi_2 \rangle)+\frac{k}{2\l_2}\Im \langle \zeta,\phi_2\rangle\Im\langle\phi_1,\phi_2\rangle,\nonumber\\
(\text{Model} \ 2) \ \ \ddt \Re\langle \phi_1, \phi_2 \rangle&= \frac{k}{2} \frac{\th_1}{\lambda_1}\left(\Re\langle \zeta, \phi_2\rangle-\Re\langle \zeta,\phi_1\rangle\Re\langle \phi_1,\phi_2\rangle\right)+\frac{k}{2}\lambda_1\Im\langle \phi_1,\zeta\rangle\Im\langle \phi_1,\phi_2\rangle\nonumber,\\
&+\frac{k}{2}\frac{\th_2}{\lambda_2}\left(\Re\langle \phi_1,\zeta\rangle-\Re\langle \zeta,\phi_2\rangle\Re\langle \phi_1,\phi_2\rangle\right)+\frac{k}{2}\lambda_2\Im\langle \zeta, \phi_2\rangle\Im\langle \phi_1,\phi_2\rangle.\nonumber
\end{align}
Now, again because $N=2$, we have $\langle \phi_1,\zeta\rangle=\frac{1}{2}\l_1+\frac{1}{2}\lambda_2\langle \phi_1,\phi_2\rangle$, and a similar equality for $\langle \phi_2,\zeta\rangle$ holds, to allow us to continue,
\begin{align*}
(\text{Model} \ 1) \ \ \ddt \Re\langle \phi_1,\phi_2\rangle&=\frac{k}{4}\left(\frac{\th_1\l_2}{\l_1}+\frac{\th_2\l_1}{\l_2}\right)\left(1-(\Re\langle\phi_1,\phi_2\rangle)^2\right)+\frac{k}{4}\left(\frac{\l_2}{\l_1}+\frac{\l_1}{\l_2}\right)(\Im\langle \phi_1,\phi_2\rangle)^2,\\
(\text{Model} \ 2) \ \ \ddt \Re\langle \phi_1,\phi_2\rangle&=\frac{k}{4}\left(\frac{\th_1\lambda_2}{\lambda_1}+\frac{\th_2\lambda_1}{\lambda_2}\right)\left(1-\left(\Re\langle \phi_1,\phi_2\rangle\right)^2\right)+\frac{k}{2}\lambda_1\lambda_2\left(\Im\langle \phi_1,\phi_2\rangle\right)^2.
\end{align*}

Now we see that since $\Re\langle \phi_1,\phi_2\rangle\leq 1$, we have $\ddt \Re\langle \phi_1,\phi_2\rangle\geq 0$ and therefore $\Re\langle \phi_1,\phi_2\rangle \to c \in (-1,1]$, and hence $\ddt \Re\langle \phi_1,\phi_2\rangle \to 0$, by Barbalat's lemma. However, if $c\neq 1$, then $\ddt \Re\langle \phi_1,\phi_2\rangle>0$, thus $c=1$, as it is the only value other than $\Re\langle \phi_1,\phi_2\rangle=-1$ for which $\ddt \Re\langle \phi_1,\phi_2\rangle=0.$ Thus, for both models and any initial conditions such that $\Re\langle \phi_1,\phi_2\rangle\neq-1$, we have synchronization. \\

\subsection{Aggregation and Boundedness}
Before achieving the exponential rate of synchronization, we show aggregation of the centers of masses, as well as boundedness of the masses for Model 1. Now, from synchronization of the $\phi_j$'s, we have $\frac{\psi_j}{\|\psi_j\|_2} \to \frac{\zeta}{\|\zeta\|_2}$, which gives $x_j \to \frac{1}{\|\zeta\|_2^2}\int x|\zeta|^2 \dx$, and hence aggregation of the centers of mass ensues.

We turn our attention to the Cucker-Smale dynamics for the parameters $\th_j$,
\begin{align}
\dot{\th_j}&=\frac{\mu}{N}\sum_{k=1}^Nh(|x_j-x_k|)(\th_k-\th_j), \ \ \mu \geq 0, \nonumber\\
x_j&=\frac{1}{\|\psi_j\|_2^2}\int_{\mathbb{R}^d} x|\psi_j(x,t)|^2 \dx, \nonumber
\end{align}
since the center of masses aggregate, asymptotically $h(|x_j-x_k|) \sim h(0)$, but in particular for any influence function $h(r)$, satisfying the heavy tail condition, we have $h(|x_1-x_2|)\geq m>0$ for all time $t>0$. Thus we have \eqref{align}.

Now for Model 1, using \eqref{e:psi2} this guarantees the boundedness of each $\|\psi_j\|_2$. Indeed, let $\psi_+$ represent the larger of the two oscillators in $L^2$ norm. This can potentially change which oscillator we are referring to over time, however we can proceed using the Rademacher lemma,

\begin{align}
\ddt \l_+&=\frac{k}{2}\Re\langle \zeta,\phi_+\rangle(\th_+-1),\\
|\ddt\l_+|&\leq \frac{k}{2} \|\zeta\|_2|\th_+-1|(0)e^{-\mu mt},\\
&\leq \frac{k}{2}\l_+|\th_+-1|(0)e^{-\mu mt},\\
\l_+(t)&\leq \l_+(0)\exp\left(\frac{k}{2\mu m}|\th_+-1|(0)(1-e^{-\mu mt})\right),\label{eq:ub}\\
&\leq \l_+(0)\exp\left(\frac{k}{2\mu m}|\th_+-1|(0)\right).
\end{align}
Then for Model 2, we have \eqref{align} and the synchronization of the directions of each of the oscillators, but to see their limiting sizes, consider,
\begin{align}
\ddt\l_j=\frac{k}{2}\Re\langle \zeta, \phi_j\rangle(\th_j-\l_j^2).
\end{align}
As $\Re\langle \phi_1,\phi_2\rangle \to 1$, we have a $T\geq  0$,  such that $\Re\langle \zeta,\phi_j\rangle(t)\geq c>0$ for all $t\geq T$. Thus,
\begin{align}
\ddt (\th_j-\l_j^2)&=\frac{\mu}{2}\sum_{k=1}^2h(|x_k-x_j|)(\th_k-\th_j)-k\l_j\Re\langle \zeta,\phi_j\rangle(\th_j-\l_j^2)\\
&\leq |\th_1-\th_2|(0)e^{-\mu mt}-c(\th_j-\l_j^2),
\end{align}
and Duhamel's Principle lets us conclude $\th_j-\l_j^2\to 0$ exponentially fast, and hence $\l_j^2 \to \bar{\th}$ as well.

Note that for $\mu=0$, $\ddt \th_j=0$ and thus the intrinsic desired mass of the oscillator does not change. In this case the synchronization of directions still occurs, but if $\th_1\neq \th_2$, then $\l_j \to \sqrt{\th_j}$, still at an exponential rate, so that each oscillator has a different total mass.  The center of masses still aggregate in this case as well $|x_1-x_2|\to0$.

\subsection{Exponential Synchronization}
With bounds on the masses of each oscillator, and synchronization of the individual oscillators, we can bootstrap the rate of synchronization. We have,
\begin{align*}
(\text{Model} \ 1) \ \ &\ddt(1-\Re\langle \phi_1,\phi_2\rangle^2)=\\
&\ \ \ -\frac{k}{2}\left(\frac{\th_1\l_2}{\l_1}+\frac{\th_2\l_1}{\l_2}\right)\left(1-(\Re\langle\phi_1,\phi_2\rangle)^2\right)\Re\langle \phi_1,\phi_2\rangle-\frac{k}{2}\left(\frac{\l_2}{\l_1}+\frac{\l_1}{\l_2}\right)(\Im\langle \phi_1,\phi_2\rangle)^2\Re\langle \phi_1,\phi_2\rangle,\\
(\text{Model} \ 2) \ \ &\ddt(1-\Re\langle \phi_1,\phi_2\rangle^2)=\\
&\ \ \ -\frac{k}{2}\left(\frac{\th_1\lambda_2}{\lambda_1}+\frac{\th_2\lambda_1}{\lambda_2}\right)\left(1-\left(\Re\langle \phi_1,\phi_2\rangle\right)^2\right)\Re\langle \phi_1,\phi_2\rangle-k\lambda_1\lambda_2\left(\Im\langle \phi_1,\phi_2\rangle\right)^2\Re\langle \phi_1,\phi_2\rangle.
\end{align*}
Now since $\Re\langle \phi_1,\phi_2\rangle \to 1,$ and $\ddt \Re\langle \phi_1,\phi_2\rangle\geq 0$, there is a time, $T$,  such that $\Re\langle \phi_1,\phi_2\rangle(T)\geq c>0$ for all $t>T$. Further, with each $\l_j$ bounded and $\th_j \to 1$ (or constant in the case of Model 2 and $\mu=0$) we have,
\begin{align}
\ddt(1-\Re\langle \phi_1,\phi_2\rangle^2)&\leq -c(1-\Re\langle \phi_1,\phi_2\rangle^2),  \ \ \text{for all} \ t>T, \nonumber\\
\end{align}
and therefore synchronization occurs exponentially in time for both of the models.\\

\subsection{Quantitative Bounds}\label{QB}

Now for Model 2 we already have asymptotic dynamics for the mass of each oscillator, $\l_j^2 \sim \th_j$, but with exponential synchronization, we can establish quantitative bounds for the mass of each oscillator in Model 1. This section will provide these bounds for Model 1.

First, without loss of generality, suppose that $\l_1(0)\geq\l_2(0)$. Then by cases let us first check the case where $\th_1\geq 1$, and recall that $\th_1(t)+\th_2(t)=2.$\\

$\mathit{Case \ 1: \th_1\geq 1}$.\\

 Then we have first for the larger $\l_1$,
\begin{align}
\ddt \l_1&=\frac{k}{2}\Re\langle \zeta,\phi_1\rangle(\th_1-1)\\
&=\frac{k}{4}(\l_1+\l_2\Re\langle\phi_1,\phi_2\rangle)(\th_1-1)\geq 0
\end{align}

Further, from the Cucker-Smale dynamics we have,
\begin{align}
(\th_1-1)(t)\leq (\th_1-1)(0)e^{-\mu mt},
\end{align}
where $m=\min_th(|x_1-x_2|)$.  Thus,
\begin{align}
\ddt\l_1\leq \frac{k}{2}\l_1(\th_1-1)(0)e^{-\mu mt},
\end{align}
and therefore,
\begin{align}
\l_1(t)\leq \l_1(0)\exp\left(\frac{k}{2\mu m}(\th_1-1)(0)(1-e^{-\mu mt})\right)\leq \l_1(0)\exp\left(\frac{k}{2\mu m}(\th_1-1)(0)\right).
\end{align}

Now for $\l_2$,
\begin{align}
\ddt \l_2&=\frac{k}{2}\Re\langle \zeta,\phi_2\rangle(\th_2-1)\\
&=\frac{k}{4}(\l_1\Re\langle \phi_1,\phi_2\rangle+\l_2)(1-\th_1).
\end{align}
as $\l_2(0)\leq \l_1(0)$, it is possible for $\l_2$ also to grow if $\Re\langle \phi_1,\phi_2\rangle(0)\sim -1$. However,  if there is a time $T$ where $\l_2(T)=\l_1(T)$, we would have $\ddt \l_1(T)=-\ddt \l_2(T)$, which could only occur if $\th_1=\th_2=1$, (which would simply be the original Schrodinger-Lohe model) or if $\Re\langle \phi_1,\phi_2\rangle(T)=-1$,  which can only occur if $\ddt \Re\langle \phi_1,\phi_2\rangle(0)=-1$, as $\ddt \Re\langle \phi_1,\phi_2\rangle \geq 0.$ Therefore $\l_2(t)\leq \l_1(t)\leq \l_1(0)\exp\left(\frac{k}{2\mu m}(\th_1-1)(0)\right).$

Then to achieve a lower bound on $\l_2$, we note that $\Re\langle \zeta, \phi_2\rangle \leq \l_1$, then, using the bound we already achieved for $\l_1$ and $1-\th_1$,
\begin{align}\label{eq:lower_bound}
\ddt \l_2 &\geq \frac{k}{2}\l_1(1-\th_1)\\
&\geq \frac{k}{2}\l_1(0)\exp\left(\frac{k}{2\mu m}(\th_1-1)(0)\right)(1-\th_1)(0)e^{-\mu mt}.
\end{align}
Then integrating we get,
\begin{align}
\l_2(t)&\geq \l_2(0)-\frac{k}{2}\frac{\l_1(0)}{\mu m}(\th_1-1)(0)\exp\left(\frac{k}{2\mu m}(\th_1-1)(0)\right)\left(1-e^{-\mu mt}\right)\\
&\geq  \l_2(0)-\frac{k}{2}\frac{\l_1(0)}{\mu m}(\th_1-1)(0)\exp\left(\frac{k}{2\mu m}(\th_1-1)(0)\right).
\end{align}

$\mathit{Case \ 2: \th_1< 1}$.\\

Now, if $\th_1<1$, we note that since $\l_1(0)>\l_2(0)$, $\Re\langle \zeta,\phi_1\rangle>(0)>0$, and therefore $\ddt \l_1 \leq 0$. If there exists a time $T$ such that $\l_1(T)=\l_2(T)$, then at this time, $\ddt \l_1(T)=-\ddt \l_2(T)$, and hence for $t\geq T, \l_2(t)\geq \l_1(t)$, and we are in the first case with the roles of $\l_1$ and $\l_2$ reversed. In the case that $\l_1(t) > \l_2(t)$ for all $t$, we proceed as before to get,
\begin{align}
\ddt \l_1 &\geq \frac{k}{2} \l_1(\th_1-1)(0)e^{-\mu mt}\\
\l_1(0)\geq \l_1(t)&\geq \l_1(0)\exp\left(\frac{k}{2\mu m}(\th_1-1)(0)\right),
\end{align}
and for $\l_2$,
\begin{align}
\l_2(t)\leq \l_2(0)+\frac{k}{2}\frac{\l_1(0)}{\mu m}(1-\th_1)(0)\exp\left(\frac{k}{2 \mu m}(\th_1-1)(0)\right).
\end{align}
This is true for all initial conditions except for if $\phi_1(0)=\pm\phi_2(0)$. In which case $\ddt \Re\langle \phi_1,\phi_2\rangle=0$ for all time. Hence, assuming without loss of generality that $\l_1(0)>\l_2(0)$, we have $\phi_1=\frac{\zeta}{\|\zeta\|}=\pm\phi_2$. Then the only thing to observe is the mass of each oscillator, using $\frac{1}{2}(\th_1+\th_2)=1$,
\begin{align}
\ddt \l_1&=\frac{k}{2}\|\zeta\|(\th_1-1),\\
\ddt \l_2&=\pm\frac{k}{2}\|\zeta\|(\th_2-1)=\mp\ddt \l_1,
\end{align}
where, $\ddt \l_1=-\ddt \l_2$ if $\phi_1=\phi_2$ and $\ddt \l_1=\ddt \l_2$ if $\phi_1=-\phi_2$.
Hence, in either case, $\ddt \|\zeta\|=0$, and since $x_1=x_2$, we have exponential alignment of $\th_j$ so that $\ddt\|\psi_j\| \to 0$ exponentially fast, ensuring boundedness of each $\|\psi_j\|$.

In particular, the same bounds above hold with $m=h(0)$.\\

The final note about the case of $N=2$ oscillators with identical natural frequencies, is the existence of the unstable equilibrium at $\phi_1=-\phi_2$. Which from $\ddt \Re \langle \phi_1,\phi_2\rangle>0$ for $\phi_1 \neq \pm \phi_2$, is of course not stable, and further can only occur if initially so, $\phi_1(0)=-\phi_2(0)$.  This concludes the proof of Theorem \ref{synch2} and its Corollary \ref{bs}.

\subsection{Different Natural frequencies}

To conclude the analysis of the two oscillator case, we can investigate the synchronization dynamics when the two oscillators have differing natural frequencies, as well. The following analysis only applies to Model 1. Consider,
\begin{align}\label{dfreq}
i\partial_t \psi_1&=H\psi_1+\O\psi_1+\frac{ik}{4}\left(\th_1(\psi_1+\psi_2)-(\psi_1+\frac{\langle \psi_2,\psi_1\rangle}{\|\psi_1\|_2^2} \psi_1)\right),\\
i\partial_t \psi_2&=H\psi_2-\O\psi_2+\frac{ik}{4}\left(\th_2(\psi_1+\psi_2)-(\frac{\langle \psi_1,\psi_2\rangle}{\|\psi_2\|_2^2} \psi_2+\psi_2)\right)\nonumber.
\end{align}
along with $\th_j$ controlled by the Cucker-Smale dynamics with absolute kernel, $\inf h(r)=c>0.$
Indeed we see that synchronization will still occur depending on the relative size of the natural frequency $\O$ and the masses of the oscillators, however the limiting state will be different.

First, notice that the absolute kernel immediately yields \eqref{align} and thus the quantitative bounds established in the previous section hold. Again splitting the oscillator into $\psi_j(x,t)=\l_j(t)\phi_j(x,t)$, where $\l_j(t)=\|\psi_j\|_2(t)$,  we let $\bar{\l}_j=\lim_t \l_j(t)$ for each oscillator and let $z(t)=\langle \phi_1,\phi_2\rangle$ be the correlation between the two oscillator directions. Then the synchronization dynamics can be characterized by the size of $\L=\frac{4\O\bar{\l}_1\bar{\l}_2}{k(\bar{\l}_1^2+\bar{\l}_2^2)}$.
\begin{theorem}\label{thm:nio}
Let $\psi_{1,0},\psi_{2,0} \in L^2(\mathbb{R}^d).$ Then depending on the relative size of the coupling constant, the limiting masses, and the natural frequency, the dynamics are as follows

(1) for $0\leq \L<1$ either complete synchronization ensues at an exponential rate,
\begin{align}
    1-|z(t)| \lesssim e^{-Ct}.
\end{align}
as $t \to \infty$ with limiting correlation
\begin{align}
    |z(t)-z_1|\lesssim e^{-Ct},
\end{align}
$z_1=\sqrt{1-\L^2}+i\L$, or $z(t)$ converges to an unstable equilibrium $z_2=-\sqrt{1-\L^2}+i\L$\\

(2) for $\L=1$, complete synchronization occurs at a linear rate,
\begin{align}
    1-|z(t)|\lesssim t^{-1},
\end{align}
as $t \to \infty$, with limiting correlation $z_1=z_2=i$,\\

(3) for $\L>1$,

the correlations tend towards periodic behavior.
\end{theorem}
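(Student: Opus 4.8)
The plan is to reduce the whole question to the phase portrait of a single planar ODE for the complex correlation $z(t)=\langle\phi_1,\phi_2\rangle$, where $\phi_j=\psi_j/\l_j$ as in \eqref{eq:m1_dec}. First I would fix the slowly varying data. Since the kernel is absolute, \eqref{align} gives $\th_j\to1$ at an exponential rate; feeding this and the bounds of subsection \ref{QB} into $\ddt\l_j^2=k\Re\langle\zeta,\phi_j\rangle(\th_j-1)$ shows that each $\l_j$ stays bounded, stays bounded away from $0$, and converges exponentially to some $\bar\l_j>0$, so that $\L$ is well defined. Next I would differentiate $z$ using the renormalized equations (the analogue of \eqref{eq:m1_dec} with $H_1=H+\O$, $H_2=H-\O$). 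The decisive algebraic point is that the shared Hamiltonian drops out, $i\langle H\phi_1,\phi_2\rangle-i\langle\phi_1,H\phi_2\rangle=0$, so only the frequency gap and the coupling survive; substituting $\langle\zeta,\phi_1\rangle=\tfrac12(\l_1+\l_2\bar z)$ and $\langle\zeta,\phi_2\rangle=\tfrac12(\l_1 z+\l_2)$ yields
\begin{equation*}
\dot z=2i\O z-i\frac{k}{4}\Big(\frac{\l_2}{\l_1}+\frac{\l_1}{\l_2}\Big)\Im(z)\,z+\frac{k}{4}\Big(\frac{\th_1\l_2}{\l_1}+\frac{\th_2\l_1}{\l_2}\Big)\big(1-\Re(z)\,z\big).
\end{equation*}
Writing $z=x+iy$, a short computation gives the exact identity $\ddt|z|^2=2\k_2(t)\,x\,(1-|z|^2)$ with $\k_2(t)=\tfrac{k}{4}(\tfrac{\th_1\l_2}{\l_1}+\tfrac{\th_2\l_1}{\l_2})>0$, so the closed unit disk and its boundary circle are invariant for the exact, time-dependent flow.

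Because $\l_j\to\bar\l_j$ and $\th_j\to1$ exponentially, the $z$-equation is an exponentially small, time-integrable perturbation of the autonomous system
\begin{equation*}
\dot x=\k\big(1-x^2+y^2-2\L y\big),\qquad\dot y=2\k\,x(\L-y),\qquad\k=\frac{k(\bar\l_1^2+\bar\l_2^2)}{4\bar\l_1\bar\l_2},\quad\L=\frac{\O}{\k}.
\end{equation*}
Its equilibria in the disk are $z_{1,2}=\pm\sqrt{1-\L^2}+i\L$ on the boundary when $\L\le1$, and the interior point $i(\L-\sqrt{\L^2-1})$ when $\L>1$. On the invariant circle $z=e^{i\th}$ the flow reduces to the scalar equation $\dot\th=2\k(\L-\sin\th)$, which classifies everything: for $\L<1$ the point $z_1$ (with $x>0$) is a hyperbolic sink on the circle and $z_2$ (with $x<0$) a hyperbolic source; at $\L=1$ they coalesce into the degenerate fixed point $i$; for $\L>1$ there is no boundary equilibrium and $\th$ is strictly increasing, so the boundary circle itself is a periodic orbit.

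I would then treat the three regimes and transfer the conclusions back to the true system. For $0\le\L<1$ the imaginary-axis condition $y^2-2\L y+1=0$ has no real root, so there are no interior equilibria; combining Poincar\'e--Bendixson on the compact invariant disk (index theory forbids interior cycles) with the monotonicity of $|z|^2$ pins the $\omega$-limit to $z_1$ for every trajectory off the one-dimensional stable manifold of $z_2$. Hyperbolicity of the sink $z_1$, together with the time-integrability of the perturbation, upgrades this to $|z-z_1|\lesssim e^{-Ct}$ and $1-|z|\lesssim e^{-Ct}$, the exceptional stable manifold giving convergence to $z_2$; this is (1). At $\L=1$ the linearization at $i$ vanishes identically, and the local normal form $\dot\e=\k\e^2$ read off from $\dot\th=2\k(1-\sin\th)$ near $\th=\tfrac\pi2$ forces the algebraic rate $1-|z|\lesssim t^{-1}$ of (2), to be confirmed by a barrier/comparison argument. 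For $\L>1$ the interior equilibrium has eigenvalues $\pm2i\k\sqrt{\L^2-1}$ and the limiting field is reversible under $(x,y,t)\mapsto(-x,y,-t)$, hence it is a genuine nonlinear center encircled by periodic orbits, giving the asymptotically periodic behavior of (3).

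The genuinely delicate part is the passage from the autonomous limit to the true asymptotically autonomous equation in the non-hyperbolic regimes $\L\ge1$. A hyperbolic sink is robust, so (1) follows from standard perturbation theory once the phase portrait is known; but centers and degenerate fixed points are not structurally stable, so for (2) and (3) one cannot simply invoke an asymptotically-autonomous reduction theorem. I expect to rely instead on the exact identity $\ddt|z|^2=2\k_2(t)x(1-|z|^2)$ to control the modulus directly, and to couple it with the exponential (hence integrable) decay of $|\l_j-\bar\l_j|$ and $|\th_j-1|$ in a tailored Gronwall/comparison estimate that preserves the slow $t^{-1}$ rate at $\L=1$ and the oscillation when $\L>1$. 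Pinning down the sharp rate in (2) and giving precise meaning to ``tends towards periodic behavior'' in (3) are the steps I expect to cost the most.
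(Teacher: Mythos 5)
Your proposal is correct in substance and reaches the theorem by a genuinely different route from the paper. The shared skeleton is identical: exponential relaxation of $\th_j$ and $\l_j$ (absolute kernel plus the quantitative bounds of subsection \ref{QB}) reduces everything to a planar ODE for $z=\langle\phi_1,\phi_2\rangle$ that is an exponentially small perturbation of an autonomous limit system; your equation for $\dot z$ is algebraically the same as the paper's
$\ddt z=2i\O z+\frac{k}{4}\left(\frac{\th_1\l_2}{\l_1}+\frac{\th_2\l_1}{\l_2}\right)(1-z^2)+\frac{ik}{4}\left(\frac{\l_2}{\l_1}(\th_1-1)+\frac{\l_1}{\l_2}(\th_2-1)\right)\Im(z)z$,
and the limit systems coincide. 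The difference is in how the limit system is treated. The paper exploits that it is a constant-coefficient complex Riccati equation and solves it explicitly (a M\"obius flow whose formula exhibits $z_1$ as attractor and $z_2$ as repeller for $\L<1$, and $y(t)=i+\left(\O t+\frac{1}{y(0)-i}\right)^{-1}$ for $\L=1$), then controls $z-y$ by Duhamel; you replace the explicit formula by qualitative phase-plane theory: the exact identity $\ddt|z|^2=2\k_2(t)x(1-|z|^2)$ (correct, and never stated in the paper), the Adler equation $\dot\th=2\k(\L-\sin\th)$ on the invariant circle, hyperbolicity for $\L<1$, Poincar\'e--Bendixson, and reversibility for $\L>1$. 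The explicit solution buys the paper concrete rates with almost no dynamical machinery; your route is more robust and is in fact more careful on two points the paper elides: for $\L>1$ the paper asserts ``there are no potential fixed points,'' whereas you correctly identify the interior equilibrium $i(\L-\sqrt{\L^2-1})$ of the limit system and argue it is a genuine center via reversibility; and you explicitly flag that the non-hyperbolic regimes $\L\geq1$ are not covered by generic asymptotically-autonomous theory --- the paper's own handling of these regimes (``$z$ cannot get stuck'' at $\L=1$, one sentence for $\L>1$) is exactly as informal as the steps you defer, so this is honest accounting rather than a gap relative to the paper. Two small corrections: the Jacobian of the limit field at $z_{1,2}$ is $\mp2\k\sqrt{1-\L^2}\,\Id$, so $z_2$ is a hyperbolic source, not a saddle; its stable set for the autonomous flow is just $\{z_2\}$ itself, and the exceptional set of perturbed trajectories converging to $z_2$ should not be described as a one-dimensional stable manifold. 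Also, the exponential rate in case (1) uses that the perturbation decays exponentially (as it does here), not merely that it is time-integrable.
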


First, the boundedness and convergence of $\l_1,\l_2$ are not changed by the inclusion of differing natural frequencies. Therefore none of the three possible configurations of parameters are empty.

In order to prove the theorem we will appeal to the fact that we can explicitly solve the system up to an exponentially decaying term, analogous to what is shown in \cite{AM}. First for $\L<1$ and for $z(t)=\langle \phi_1,\phi_2\rangle$ we compute,
\begin{align}
\ddt z(t)=2i\O z+\frac{k}{4}\left(\frac{\th_1\l_2^2+\th_2\l_1^2}{\l_1\l_2}\right)(1-z^2)+\frac{ik}{4}\left(\frac{\l_2}{\l_1}(\th_1-1)+\frac{\l_1}{\l_2}(\th_2-1)\right)\Im(z)z
\end{align}
Now as $\th_1,\th_2$ are controlled by the Cucker-Smale dynamics under an absolute kernel, we not only have $\th_1,\th_2 \to 1$ exponentially fast, but also $\l_1,\l_2 \to \bar{\l}_1,\bar{\l}_2$, all at an exponential rate, and hence,
\begin{align}
\ddt z(t)=2i\O z+\frac{\O}{\L}(1-z^2)+E_1(z,t)
\end{align}
for $E_1(z,t)$ an exponentially decaying quantity that depends on $z$, but due to $0\leq |z|\leq 1$, it is bounded above and below by exponentially decaying quantities that do not depend on $z$, $E_0(t)\leq |E_1(z,t)| \leq E_2(t)$. Now this is a perturbation of the differential equation,
\begin{align}
\ddt y(t)=2i\O y+\frac{\O}{\L}(1-y^2),
\end{align}
which, can be shown to have solution,
\begin{align}
y(t)=\frac{z_1+\bar{z}_1\frac{y(0)-z_1}{y(0)+\bar{z}_1}e^{\frac{-2\O}{\L}\sqrt{1-\L^2}t}}{1-\frac{y(0)-z_1}{y(0)+z_1}e^{\frac{-2\O}{\L}\sqrt{1-\L^2}t}}
\end{align}
so long as $y(0)\neq z_2$.

Then we compute
\begin{align}
    \ddt (z(t)-y(t))&=2i\O(z-y)+\frac{\O}{\L}(y-z)(y+z)+E_1(z,t),\\
    &=\O(z-y)\left(2i-\frac{1}{\L}(y+z)\right)+E_1(z,t).
\end{align}
Indeed for large times, where $|y(T)-z_1|<\e_1$ and $|E_1(z,T)|<\e_2$ we see that the difference $z(t)-y(t)$ behaves monotonically away from the asymptotic fixed points, $z_1$ and $z_2$ where the monotonicity can break down near these points due to the small fluctuations of $E_1(z,t)$. Indeed for $-\sqrt{1-\L^2}<<\Re{z(T)}$ Duhamel's principle gives,
\begin{align}
    z(t)-y(t)&=(z(T)-y(T))\exp{\left(i\O\int_T^t 2-\frac{1}{\L}(\Im{y(s)}+\Im{z(s)})\ds\right)}\exp{\left(-\O\int_T^t\Re{y(s)}+\Re{z(s)} \ds\right)}\\
    &+\int_T^tE_1(z,s)\exp{\left(\O\int_s^t 2i-\frac{1}{\L}(y(\t)+z(\t))\dtau\right)}
\end{align}
and clearly $\Re{y(s)}+\Re{z(s)}>0$ and $z(t)-y(t)$ converges to zero exponentially in time.

Now if $\Re{z(T)}<< -\sqrt{1-\L^2}$ then $z(t)-y(t)$ actually grows as $\Re{y(s)}+\Re{z(s)}<0$, which means $z(t)$ is approaching the unstable asymptotic fixed point, $z_2$. Now this fixed point is unstable as the perturbation from $E_1(z,t)$ could push $z$ so that it begins to approach $z_1$.

Therefore, for $\L<1$, $z(t)$ either converges to the unstable asymptotic fixed point $z_2$, or converges exponentially fast to the stable fixed point $z_1$.\\

Now let $\L=1$. Then,
\begin{align}
    \ddt z(t)=\O(1+2iz-z^2)+E_1(z,t)
\end{align}
and the corresponding unperturbed problem,
\begin{align}
    \ddt y(t)=\O(1+2iy-y^2),
\end{align}
which has solution
\begin{align}
    y(t)=i+\left(\O t+\frac{1}{y(0)-i}\right)^{-1}
\end{align}

Then proceeding as in the previous case,
\begin{align}
    \ddt z(t)-y(t)=\O(z-y)(2i-(y+z))+E_1(z,t)
\end{align}

However, there is only one possible asymptotic fixed point at $z_1=z_2=i$, therefore there is no point where $z(t)$ can get `stuck'. Therefore, $z(t) \to i$ as well, but at the same rate $t^{-1}$ as $y(t)$ converges to the fixed point. Lastly, for $\L>1,$ there are no potential fixed points and $z(t)$ will tend towards periodic behavior.

This concludes the analysis of the models with two oscillators. For the remainder of the paper, $N\geq 2$, and all natural frequencies will be identical, $\O_j=0$.

\section{Model 1: A perturbation of the Schr\"odinger-Lohe Model}\label{sec:m1}
To consider the model with more than two oscillators, $N\geq 2$, we will investigate each model separately, as with more oscillators involved the dynamics and techniques needed are different. Proposition \ref{prop:gwp1} grants global well-posedness of Model 1 for any initial configuration.
\subsection{Synchronization with the Absolute Kernel} We will first consider the Model 1 system, with an absolute kernel for the Cucker-Smale influence function, i.e., $h$ satisfies, $\inf h(r)= c>0$. In this case, we can prove synchronization, for any initial conditions, to either full synchronization, or an unstable, Bipolar state. Let us rewrite the system here:
\begin{align}\label{1abs}
i\partial_t \psi_j&=H\psi_j+\frac{ik}{2N}\sum_{l=1}^N\left(\th_j\psi_l-\frac{\langle \psi_l,\psi_j\rangle}{\|\psi_j\|_2^2} \psi_j\right),\nonumber\\
\psi_j(x,0)&=\psi_j^0(x), \ \ j=1,\dots,N,\nonumber\\
\dot{\th_j}&=\frac{\mu}{N}\sum_{k=1}^Nh(|x_j-x_k|)(\th_k-\th_j), \ \ \mu > 0,\\
\inf_{r\geq 0}h(r)&=c>0,\nonumber\\
x_j&=\frac{1}{\|\psi_j\|_2^2}\int_{\mathbb{R}^d} x|\psi_j(x,t)|^2 \dx, \nonumber
\end{align}

\begin{theorem}\label{t:synch1}
Solutions to the system \eqref{1abs} undergo either complete synchronization at an exponential rate, 
\begin{align}
    1-\Re\langle \phi_+,\phi_-\rangle(t)\lesssim e^{-Ct},
\end{align}
or converge to an unstable Bipolar state,
\begin{align}
    \lim_{t\to\infty}\Re\langle \phi_j,\phi_k\rangle=\pm 1, \ \ \forall j,k=1,...,N.
\end{align}
Further, the absolute kernel in the Cucker-Smale dynamics provides exponential aggregation of centers of masses for both the stable and unstable states.
\end{theorem}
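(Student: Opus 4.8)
The plan is to exploit a structural simplification special to the absolute kernel and to the normalization $\bar\th=\frac1N\sum_j\th_j\equiv1$: because the conserved average equals one, the parameters $\th_j$ cancel in the equation for the order parameter $\zeta=\frac1N\sum_j\psi_j$, so $\zeta$ solves exactly the identical-frequency Schr\"odinger--Lohe order-parameter equation and its $L^2$ norm is monotone. First I would record, using the absolute kernel, that \eqref{align} holds with \emph{no} prior control on the centers of mass (since $\inf h=c>0$ gives a uniform coupling), so $|\th_j-1|\lesssim e^{-ct}$ and in particular $\int_0^\infty|\th_j-1|\,dt<\infty$. Inserting this into \eqref{e:psi2} and running the Gr\"onwall/Rademacher argument on $\l_\pm=\max/\min_j\l_j$ exactly as in subsection \ref{QB} yields uniform-in-time bounds $0<C_1\le\l_j(t)\le C_2<\infty$; in particular the renormalized oscillators $\phi_j=\psi_j/\l_j$ and the correlations $\Re\langle\phi_j,\phi_k\rangle$ are well defined for all time.

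Next I would establish the monotonicity. Differentiating $\|\zeta\|_2^2=\frac1N\sum_j\langle\zeta,\psi_j\rangle$, using self-adjointness of $H$ (so $\Re(-i\langle\zeta,H\zeta\rangle)=0$) and $\frac1N\sum_j\th_j=1$, gives
\begin{equation*}
\tfrac12\ddt\|\zeta\|_2^2=\frac{k}{2N}\sum_{j=1}^N\Big(\|\zeta\|_2^2-\Re\big(\langle\zeta,\phi_j\rangle^2\big)\Big).
\end{equation*}
Since $\Re(\langle\zeta,\phi_j\rangle^2)\le|\langle\zeta,\phi_j\rangle|^2\le\|\zeta\|_2^2$ by Cauchy--Schwarz and $\|\phi_j\|_2=1$, every summand is nonnegative, so $\|\zeta\|_2^2$ increases to a limit $L\le C_2^2$. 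Using the global $\Sigma$-bounds from Proposition \ref{prop:gwp1} to guarantee uniform continuity of the right-hand side, Barbalat's Lemma \ref{BL} forces $\ddt\|\zeta\|_2^2\to0$; as the summands are nonnegative and sum to zero in the limit, each obeys $\|\zeta\|_2^2-\Re(\langle\zeta,\phi_j\rangle^2)\to0$. Writing $\langle\zeta,\phi_j\rangle=p_j+iq_j$, the sandwich above gives $2q_j^2\le\|\zeta\|_2^2-\Re(\langle\zeta,\phi_j\rangle^2)\to0$, hence $\Im\langle\zeta,\phi_j\rangle\to0$ and $|\langle\zeta,\phi_j\rangle|\to\sqrt L$.

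On the nondegenerate set $L>0$ the real part $\Re\langle\zeta,\phi_j\rangle$ cannot cross zero for large $t$, so its sign stabilizes to some $\s_j\in\{+1,-1\}$ and $\langle\zeta,\phi_j\rangle\to\s_j\sqrt L$, equivalently $\bigl\|\phi_j-\s_j\zeta/\|\zeta\|_2\bigr\|_2\to0$; therefore $\Re\langle\phi_j,\phi_k\rangle\to\s_j\s_k\in\{\pm1\}$ for every pair. If all $\s_j$ coincide this is complete synchronization, otherwise a bipolar configuration, and these bipolar configurations are precisely the unstable equilibria of the monotone dynamics, exactly as the $\phi_1=-\phi_2$ state in subsection \ref{QB}; the borderline $L=0$ (forcing $\zeta\equiv0$ by monotonicity) is the fully symmetric equilibrium, so the dichotomy holds for data with $\|\zeta\|_2(0)>0$. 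To upgrade to the exponential rate I would bootstrap as in the Exponential Synchronization step for $N=2$: once $|\Re\langle\phi_j,\phi_k\rangle|$ is close to $1$, I differentiate $1-\Re\langle\phi_+,\phi_-\rangle^2$ for the extremal (least-correlated) pair, treating the max/min by Rademacher's lemma, and use $\th_j\to1$ together with $C_1\le\l_j\le C_2$ to close a differential inequality $\ddt\big(1-\Re\langle\phi_+,\phi_-\rangle^2\big)\le-C\big(1-\Re\langle\phi_+,\phi_-\rangle^2\big)$.

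Finally, aggregation follows because $|\phi_j|^2\to|\zeta/\|\zeta\|_2|^2$ in $L^2$ regardless of the sign $\s_j$ (the modulus is insensitive to $\pm$), so each first moment $x_j=\int x|\phi_j|^2\,dx$ converges to the common center of mass of $\zeta/\|\zeta\|_2$ in both the synchronized and the bipolar case; interpolating the exponential $L^2$ decay of $\phi_j\mp\zeta/\|\zeta\|_2$ against the uniform $\Sigma$-bounds yields $|x_j-x_k|\lesssim e^{-Ct}$. The main obstacle I anticipate is twofold. First, converting the scalar monotonicity of $\|\zeta\|_2$ into a clean \emph{pairwise} exponential rate for $N\ge3$: unlike the two-oscillator case there is no single correlation to track, so the bootstrap must control the whole correlation structure (or the defect $\|\zeta\|_2^2-\Re(\langle\zeta,\phi_j\rangle^2)$) uniformly in $j$. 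Second, passing from $L^2$ convergence of the $\phi_j$ to a quantitative rate for the weighted moments $x_j$, which is exactly where the weighted space $\Sigma$ and the global bounds of Proposition \ref{prop:gwp1} are indispensable, and which also underlies the verification of the uniform-continuity hypothesis in Barbalat's Lemma.
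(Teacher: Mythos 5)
Your proposal follows essentially the same route as the paper's proof: exponential alignment of the $\th_j$ from the absolute kernel, the Rademacher/Gr\"onwall bound on the masses $\l_j$, monotonicity of $\|\zeta\|_2^2$ via \eqref{e:mon} with $\bar\th=1$, Barbalat's lemma forcing $\phi_j\to\pm\zeta/\|\zeta\|_2$ (hence full or bipolar synchronization), and a bootstrap on the least-correlated pair $\Re\langle\phi_+,\phi_-\rangle$ to obtain the exponential rate and the aggregation of the centers of mass. Your additional details (the $2q_j^2$ sandwich with sign stabilization making the Barbalat step explicit, and the interpolation remark for the quantitative convergence of the moments $x_j$) merely flesh out steps the paper states tersely.
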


Immediately, from the classical Cucker-Smale theory, since $\inf h(r)=c>0$ we have alignment of the $\th_j \to \bar{\th}=1$, exponentially fast, regardless of location of each $x_j$, so that once again we have \eqref{align}.

Further, by observing the $L^2$ norm of the order parameter, $\zeta$, differentiating
\begin{equation}\label{e:mon}
\ddt \|\zeta\|_2^2=k\left(\bar{\th}\|\zeta\|_2^2-\frac{1}{N}\sum_{j=1}^N \Re[\langle \phi_j, \zeta\rangle^2]\right),
\end{equation}
and, since $\bar{\th}=1$, and from the Cauchy-Schwarz inequality, and the fact that $\|\phi_j\|_2=1$, we have $\Re[\langle \phi_j, \zeta\rangle^2]\leq \|\zeta\|_2^2$, which implies $\ddt \|\zeta\|_2^2 \geq 0$. Since $\|\zeta\|_2^2$ is monotonic in time, $\lim_{t \to \infty}\|\zeta\|_2^2 =\rho_{\infty}$, for $\rho_{\infty} \in(0,\infty]$,  similar to the result of Ha et al in \cite{HHK}. However, $\|\zeta\|_2$ is not a priori bounded, thus we must show that $\rho_{\infty}$ is bounded away from infinity.

To see that $\rho_\infty<\infty$ we must establish a bound on each $\l_j=\|\psi_j\|_2$.

\begin{lemma}\label{lem:ub}
The mass of each oscillator remains bounded in time, i.e., there exists a $C>0$ such that $\l_j\leq C$ for all $j=1,...,N$ and $t>0$.
\end{lemma}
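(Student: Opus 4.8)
The plan is to bound the largest mass directly via the mass-evolution identity \eqref{e:psi2} together with a Gr\"onwall argument, exploiting the exponential alignment of the parameters $\th_j$ furnished by the absolute kernel. Set $M(t)=\lambda_+^2(t)=\max_{j}\|\psi_j(t)\|_2^2$. Since each $\|\psi_j\|_2^2$ is $C^1$ by \eqref{e:psi2} and $M$ is the maximum of finitely many such functions, $M$ is locally Lipschitz, and by the Rademacher-type lemma already used in the two-oscillator analysis we have, for a.e. $t$, $\dot M(t)=\ddt\|\psi_{j^*}\|_2^2$ for an index $j^*$ realizing the maximum at time $t$.

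First I would estimate the right-hand side of \eqref{e:psi2} at such a maximizing index. Writing $\psi_{j^*}=\lambda_{j^*}\phi_{j^*}$ with $\|\phi_{j^*}\|_2=1$ and using Cauchy--Schwarz gives $|\langle\zeta,\psi_{j^*}\rangle|\leq \lambda_{j^*}\|\zeta\|_2$. The crucial closing observation is that $\zeta=\frac1N\sum_l\psi_l$ yields $\|\zeta\|_2\leq\frac1N\sum_l\|\psi_l\|_2\leq\lambda_+$, so that at the maximizing index $|\langle\zeta,\psi_{j^*}\rangle|\leq \lambda_{j^*}\lambda_+=\lambda_+^2=M$. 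Hence \eqref{e:psi2} produces
\[
|\dot M(t)|\leq kM(t)\,|\th_{j^*}-1|(t)\leq kM(t)\max_j|\th_j-\bar\th|(t).
\]

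Next I would feed in the alignment estimate \eqref{align}: since $h$ is an absolute kernel, $\inf_{r}h(r)=c>0$, so $\max_j|\th_j-\bar\th|(t)\lesssim e^{-ct}$ with $\bar\th=1$. Thus $|\dot M|\leq C_0\,e^{-ct}M$ for a constant $C_0>0$ depending only on the data, and Gr\"onwall's inequality gives
\[
M(t)\leq M(0)\exp\!\Big(C_0\int_0^t e^{-cs}\,ds\Big)\leq M(0)\,e^{C_0/c},
\]
a bound uniform in $t$; taking square roots yields the desired constant $C$ with $\lambda_j(t)\leq\lambda_+(t)\leq C$ for all $j$ and $t>0$.

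The argument is short precisely because the absolute kernel does the heavy lifting: the forcing $|\th_j-1|$ is integrable in time, which is exactly what converts a linear-growth Gr\"onwall bound into a uniform one. The single point requiring care, and the main thing to verify, is the self-referential closure $\|\zeta\|_2\leq\lambda_+$; without it one only controls $\dot M$ by $M^{1/2}\|\zeta\|_2$, and since $\|\zeta\|_2$ is not a priori bounded the estimate would fail to close. I would also note that, combined with the lower bound \eqref{eq:lb}, the uniform boundedness of $M$ confirms $\rho_\infty<\infty$, which is the role the lemma plays in the proof of Theorem \ref{t:synch1}.
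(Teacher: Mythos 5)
Your proof is correct and follows essentially the same route as the paper: maximize over the indices via the Rademacher-type lemma, close the estimate with the key observation $\|\zeta\|_2\leq\frac1N\sum_l\lambda_l\leq\lambda_+$, invoke the exponential alignment \eqref{align} from the absolute kernel, and conclude by Gr\"onwall with an integrable-in-time forcing. The only cosmetic difference is that you run the argument on $M=\lambda_+^2$ using \eqref{e:psi2}, while the paper works directly with $\lambda_+$ via the decomposition \eqref{eq:m1_dec}; the two are equivalent.
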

\begin{proof}
Let $\psi_+(x,t)$ represent the oscillator for which $\max_{j}\l_j(t)$ is achieved. The index $+$ for $\th_+$ and $\phi_+$ correspond to the oscillator $\psi_+$ determined by $\max_{j}\l_j(t)$. Note that the oscillator $\psi_+(x,t)$ pertains to may change, thus we must employ Rademacher's lemma in order to compute
\begin{align}
\ddt \l_+&=\frac{k}{2}\Re\langle \zeta, \phi_+\rangle(\th_+(t)-1),\\
|\ddt \l_+|&\leq \frac{k}{2}\|\zeta\|_2|\th_+(t)-1|, \nonumber
\end{align}
but since $\zeta=\frac{1}{N}\sum_{j=1}^N\psi_j$, we have $\|\zeta\|_2\leq \frac{1}{N}\sum_{j=1}^N\l_j\leq \frac{1}{N}\sum_{j=1}^N\l_+=\l_+$. Therefore,
\begin{align}
|\ddt \l_+|\leq \frac{k}{2}\l_+|\th_+(0)-1|e^{-c\mu t},
\end{align}
using \eqref{align}, and
\begin{align}
\l_+(t)\leq \l_+(0)\exp\left(\frac{k}{2c\mu}|\th_+-1|(0)\right)=C
\end{align}
and therefore the mass of every oscillator remains bounded for all time.
\end{proof}
With this in hand we are ready to prove the theorem.
\begin{proof}
With the $L^2$ norm of each oscillator bounded, the $L^2$ norm of the order parameter must also be bounded and thus $\rho_{\infty}<\infty$.

Therefore we know $\ddt \|\zeta\|_2^2 \to 0$, by  Barbalat's Lemma, which of course can only occur if $\phi_j \to \pm\frac{\zeta}{\|\zeta\|_2}$ for each $j$. Therefore, we know that either Bipolar, or Full synchronization must occur for any initial condition.

Now with $\phi_j \to \pm\frac{\zeta}{\|\zeta\|}$ it is easy to see that $x_j \to \frac{1}{\rho_{\infty}}\int_{\mathbb{R}^d}x|\zeta|^2 \dx$ for each $j$, hence aggregation of the centers of masses.\\

Now to see that in the case of Full synchronization, that it must occur exponentially in time, we note that for all $j,k$, we have $\Re\langle \phi_j,\phi_k\rangle \to 1$, and in particular, there exists a time $T$ such that for all $t\geq T$, the pair $\phi_+,\phi_-$ that minimizes, $\Re\langle \phi_j,\phi_k\rangle$, satisfies,
\begin{align}
\Re\langle \phi_+,\phi_-\rangle(t)\geq C>0, \ \ \text{for \ all} \ t \geq T.
\end{align}
Therefore, just as in the $N=2$ case, we have,
\begin{align}
\ddt \Re\langle \phi_+,\phi_-\rangle&=\frac{k\th_+}{2\l_+}(\Re\langle \zeta,\phi_-\rangle-\Re\langle \zeta,\phi_+\rangle \Re\langle \phi_+,\phi_-\rangle)+\frac{k}{2\l_+}\Im \langle \phi_+,\zeta\rangle\Im\langle\phi_+,\phi_-\rangle \nonumber\\
&+\frac{k\th_-}{2\l_-}(\Re\langle \zeta,\phi_+\rangle-\Re\langle \zeta,\phi_-\rangle \Re\langle \phi_+,\phi_- \rangle)+\frac{k}{2\l_-}\Im \langle \zeta,\phi_-\rangle\Im\langle\phi_+,\phi_-\rangle.\nonumber
\end{align}

Now since $\Re\langle \phi_+,\phi_-\rangle\geq C>0$, we also have, $\Re\langle \zeta,\phi_{\pm}\rangle\geq C\|\zeta\|_2>0$ and further,
\begin{equation}\label{e:wedgeineq}
\|\zeta\|_2\Re\langle \phi_+,\phi_-\rangle\leq \Re\langle \zeta,\phi_{\pm}\rangle\leq \|\zeta\|_2.
\end{equation}
Now, since $\phi_{\pm}$ minimizes the real part of the correlation between two oscillator directions, it follows that $\phi_+$ and $\phi_-$ lie on opposite sides with respect to the order parameter $\zeta$ on the time frame $t \geq T$. Without loss of generality, assume $\Im\langle \phi_+,\zeta\rangle\geq 0$, then $\Im\langle \zeta,\phi_-\rangle\geq0$ and $\Im\langle \phi_+,\phi_-\rangle\geq 0$. To see that synchronization in fact occurs at an exponential rate, we add and subtract appropriately, and use \eqref{e:wedgeineq},
\begin{align}
\ddt \Re\langle \phi_+,\phi_-\rangle&=\frac{k\th_+}{2\l_+}(\Re\langle \zeta,\phi_-\rangle-\Re\langle \zeta,\phi_+\rangle \Re\langle \phi_+,\phi_-\rangle)+\frac{k}{2\l_+}\Im \langle \phi_+,\zeta\rangle\Im\langle\phi_+,\phi_-\rangle \nonumber\\
&+\frac{k\th_-}{2\l_-}(\Re\langle \zeta,\phi_+\rangle-\Re\langle \zeta,\phi_-\rangle \Re\langle \phi_+,\phi_- \rangle)+\frac{k}{2\l_-}\Im \langle \zeta,\phi_-\rangle\Im\langle\phi_+,\phi_-\rangle,\nonumber\\
&\geq \frac{k}{2}(1-\Re\langle \phi_+,\phi_-\rangle)(\frac{\th_+}{\l_+}\Re\langle \zeta,\phi_-\rangle+\frac{\th_-}{\l_-}\Re\langle \zeta, \phi_+\rangle)\\
&+\frac{k}{2}\Re\langle \phi_+,\phi_-\rangle(\Re\langle \zeta,\phi_-\rangle-\Re\langle\zeta,\phi_+\rangle)(\frac{\th_+}{\l_+}-\frac{\th_-}{\l_-})\nonumber\\
\end{align}
If $\frac{\th_+}{\l_+}\geq \frac{\th_-}{\l_-}$, then,
\begin{align}
\ddt \Re\langle \phi_+,\phi_-\rangle&\geq \frac{k}{2}\|\zeta\|_2(1-\Re\langle \phi_+,\phi_-\rangle)(\frac{\th_-}{\l_-}(\Re\langle \phi_+,\phi_-\rangle+\Re\langle\frac{\zeta}{\|\zeta\|_2},\phi_+\rangle) +\frac{\th_+}{\l_+}(\Re\langle \frac{\zeta}{\|\zeta\|_2}, \phi_-\rangle-\Re\langle \phi_+,\phi_-\rangle))\nonumber\\
&\geq\frac{k}{2}\|\zeta\|_2(1-\Re\langle \phi_+,\phi_-\rangle)(\frac{\th_-}{\l_-}(\Re\langle \phi_+,\phi_-\rangle+\Re\langle\frac{\zeta}{\|\zeta\|_2},\phi_+\rangle))
\end{align}
and if $\frac{\th_+}{\l_+}<\frac{\th_-}{\l_-}$, then
\begin{align}
\ddt \Re\langle \phi_+,\phi_-\rangle&\geq \frac{k}{2}\|\zeta\|_2(1-\Re\langle \phi_+,\phi_-\rangle)(\frac{\th_+}{\l_+}(\Re\langle \phi_+,\phi_-\rangle+\Re\langle\frac{\zeta}{\|\zeta\|_2},\phi_-\rangle) +\frac{\th_-}{\l_-}(\Re\langle \frac{\zeta}{\|\zeta\|_2}, \phi_+\rangle-\Re\langle \phi_+,\phi_-\rangle))\nonumber\\
&\geq\frac{k}{2}\|\zeta\|_2(1-\Re\langle \phi_+,\phi_-\rangle)(\frac{\th_+}{\l_+}(\Re\langle \phi_+,\phi_-\rangle+\Re\langle\frac{\zeta}{\|\zeta\|_2},\phi_-\rangle))
\end{align}
In either case we have,
\begin{align}
\ddt (&1-\Re\langle \phi_+,\phi_-\rangle)\leq -C(1-\Re\langle \phi_+,\phi_-\rangle),\nonumber\\
(&1-\Re\langle \phi_+,\phi_-\rangle)(t)\leq (1-\Re\langle \phi_+,\phi_-\rangle(T))e^{-Ct}.
\end{align}
for all $t\geq T.$ Thus concludes the proof.
\end{proof}
\subsection{Classification of Steady States}

Now let us classify the steady states of the system. From $\ddt \|\zeta\|_2\geq 0$, and $\ddt \|\zeta\|_2=0$ only when each $\phi_j=\pm\frac{\zeta}{\|\zeta\|_2}$, we see that all possible combinations of $\phi_j=\pm\frac{\zeta}{\|\zeta\|_2}$ correspond directly to all of the steady states of the system, other than the repulsive incoherent state $\zeta=0$. In particular let us show that $\phi_j=+\frac{\zeta}{\|\zeta\|_2}$ for all $j$ is the only stable equilibrium of the system.\\

First note that as each $\th_j \to 1$ exponentially fast, and each $\l_j$ is bounded, and thus $\|\zeta\|_2$ as well, along with $\phi_j \to \pm\frac{\zeta}{\|\zeta\|_2}$ we have for all $j$, $\l_j(t) \to \bar{\l}_j.$ Thus full synchronization is represented by,
\begin{align}
\zeta_F=\frac{1}{N}\left(\sum_{k=1}^N\bar{\l}_k\right)\phi_{\zeta}
\end{align}
with $\|\phi_{\zeta}\|=1$, and $\phi_{\zeta}$ denoting the direction of the order parameter. Then any Bipolar synchronized state can be represented,
\begin{align}
\zeta_B=\frac{1}{N}\left(\sum_{k \in I} \bar{\l}_k-\sum_{k \not\in I} \bar{\l}_k\right)\phi_{\zeta}
\end{align}
with $I=\{k : \phi_k \to +\frac{\zeta}{\|\zeta\|}=\phi_{\zeta}\}$, note that $\sum_{k \in I} \bar{\l}_k-\sum_{k \not\in I} \bar{\l}_k>0$ as $\ddt \|\zeta\|_2\geq 0$. Now to see that each Bipolar state is an unstable equilibrium, we suppose that one oscillator is perturbed off of the antipole. Then this position where $\phi_1\neq \phi_{\zeta}$ represents the direction of the oscillator that is not synchronized, and $\phi$ will represent the direction of the other $N-1$ oscillators, as with the perturbation it will no longer be the same as $\phi_{\zeta}$.
\begin{align}
\zeta_{B^{\prime}}=\frac{1}{N}\left(\sum_{k \in I} \bar{\l}_k-\sum_{k \not\in I,k\neq 1} \bar{\l}_k\right)\phi -\frac{1}{N}\bar{\l}_1\phi_1.
\end{align}
Then comparing the $L^2$ norm of both $\zeta_B$ and $\zeta_{B^{\prime}}$,
\begin{align}
\|\zeta_B\|_2&=\frac{1}{N}\left(\sum_{k \in I} \bar{\l}_k-\sum_{k \not\in I} \bar{\l}_k\right),\\
\|\zeta_{B^{\prime}}\|_2&=\left\|\left(\sum_{k \in I} \bar{\l}_k-\sum_{k \not\in I,k\neq 1} \bar{\l}_k\right)\phi -\frac{1}{N}\bar{\l}_1\phi_1\right\|_2,\\
&\geq \left|\left\|\frac{1}{N}\sum_{k \in I} \bar{\l}_k-\sum_{k \not\in I,k\neq 1} \bar{\l}_k\right\|_2-\|\frac{1}{N}\bar{\l}_1\phi_1\|_2\right|\\
&=\|\zeta_B\|_2.
\end{align}
where the inequality comes from the reverse triangle inequality and equality can only hold if $\phi_1=\phi$. Therefore, $\|\zeta_{B^{\prime}}\|_2>\|\zeta_B\|_2$, with $\ddt \|\zeta\|_2>0$ for any nonsynchronized state. Implying that each Bipolar equilibrium must indeed be unstable.

\subsection{Synchronization with the heavy tail kernel}
A key component in achieving synchronization in the previous section was knowing a priori that each $\th_j \to 1$ exponentially fast. This was derived from the usage of the absolute kernel in the Cucker-Smale dynamics appended to the system. If we loosen this condition to the classical Heavy Tail condition so that rather than $\inf_r h(r)=c>0$, we have $h(r)> 0$ and
\begin{equation}\label{HT}
\int^{\infty} h(r) \dr=\infty,
\end{equation}
then we can still achieve synchronization, but now it depends on the initial configuration of the corresponding oscillators.
\begin{theorem}\label{thm:synch3}
The system \eqref{model1} with kernel satisfying \eqref{HT} undergoes complete synchronization for initial conditions such that all oscillators satisfy the following wedge condition
\begin{equation}\label{e:wedge}
\Re\langle \phi_j,\phi_k\rangle(0)\geq 0, \ \ \text{for all} \ j,k.
\end{equation}
Further, each $\th_j \to \bar{\th}=1$, exponentially fast, and each center of mass aggregates $x_j \to \frac{1}{\rho_\infty}\int_{\R^d}x|\zeta|^2\dx$, analogous to system \eqref{1abs}. Further the rate of synchronization is exponential,
\begin{align}
    1-\Re\langle \phi_j,\phi_k\rangle\lesssim e^{-Ct}, \ \ \forall j,k.
\end{align}
\end{theorem}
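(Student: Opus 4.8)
The plan is to reduce the whole statement to a single quantitative fact: the diameter of the centers of mass $\cD(t)=\max_{i,j}|x_i-x_j|(t)$ stays bounded for all times. Indeed, once $\sup_t\cD(t)=\cD_\infty<\infty$ is known, the Heavy Tail kernel is bounded below along the trajectory, $h(\cD(t))\geq h(\cD_\infty)=:m>0$, and the Cucker--Smale dissipation estimate recorded in the preliminaries upgrades to the exponential alignment $\max_j|\th_j-1|(t)\lesssim e^{-\mu m t}$, exactly as in \eqref{align}. From here the proof of \thm{t:synch1} applies essentially verbatim: \lem{lem:ub} gives a uniform upper bound on the masses $\l_j$, the identity \eqref{e:mon} makes $\|\zeta\|_2^2$ nondecreasing and now bounded, so $\ddt\|\zeta\|_2^2\to0$ by Barbalat's Lemma \ref{BL}, forcing $\phi_j\to\pm\zeta/\|\zeta\|_2$; the wedge condition \eqref{e:wedge} rules out the Bipolar alternative, and the bootstrap built on \eqref{e:wedgeineq} promotes convergence to the exponential rate $1-\Re\langle\phi_j,\phi_k\rangle\lesssim e^{-Ct}$, together with the aggregation $x_j\to\frac1{\rho_\infty}\int x|\zeta|^2\dx$.

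Two preparatory facts feed into the reduction. First, the wedge is propagated in time: letting $(\phi_+,\phi_-)$ realize $\min_{j,k}\Re\langle\phi_j,\phi_k\rangle$ and differentiating as in \thm{t:synch1}, one checks that on the boundary set $\Re\langle\phi_+,\phi_-\rangle=0$ the right-hand side is nonnegative, since the minimizing pair lies on opposite sides of $\zeta$ and the cross terms carry a favorable sign; hence $\Re\langle\phi_j,\phi_k\rangle(t)\geq0$ for all $t$, which yields $\Re\langle\zeta,\phi_j\rangle\geq0$ and the lower bound $\|\zeta\|_2^2\geq\frac1{N^2}\sum_j\l_j^2>0$ coming from the global-in-time lower bound \eqref{eq:lb} on the masses. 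Second, whenever the masses are under control, integrating \eqref{e:mon} shows that the synchronization defect $\|\zeta\|_2^2-\frac1N\sum_j\Re[\langle\phi_j,\zeta\rangle^2]$, which dominates $\max_{j,k}(1-\Re\langle\phi_j,\phi_k\rangle)$ up to a constant, is integrable in time with a bound depending only on the controlled size of $\|\zeta\|_2$.

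The heart of the matter, and the main obstacle, is the uniform bound on $\cD(t)$, which I would obtain by a continuity (bootstrap) argument. Fix a margin $\cD^\ast>\cD(0)$ and let $[0,T^\ast)$ be the maximal interval on which $\cD(t)\leq\cD^\ast$. On this interval $h(\cD)\geq h(\cD^\ast)=:m>0$, so $\max_j|\th_j-1|(t)\lesssim e^{-\mu m t}$ is integrable with a constant independent of $T^\ast$; feeding this into $\ddt\l_+\leq\frac k2\l_+\max_j|\th_j-1|$ bounds the masses, hence $\|\zeta\|_2$, uniformly in $T^\ast$, and by the previous paragraph the synchronization defect becomes uniformly integrable on $[0,T^\ast)$. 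It remains to convert this into a bound on the displacement of the centers of mass. Writing $x_j=\int x|\phi_j|^2\dx$ and differentiating, $\ddt x_j$ splits into a conservative (Hamiltonian) contribution governed by the mean momenta $\langle p\rangle_j$, and a non-conservative contribution controlled by $\|\zeta-\Re\langle\zeta,\phi_j\rangle\phi_j\|_2$; using the $\Sigma$-bounds from Proposition \ref{prop:gwp1}, both are estimated by the synchronization defect, so that the relative centers of mass spread only when the oscillators are out of phase. The delicate point is that the naive estimate bounds the spreading speed by the \emph{square root} of the defect, whose time integral need not be finite from integrability of the defect alone; closing the loop therefore requires simultaneously propagating the exponential decay of both the defect and the relative mean momenta, and exploiting $\int^\infty h\,\dr=\infty$ in the Cucker--Smale bookkeeping, so that the total displacement stays strictly below the margin $\cD^\ast-\cD(0)$. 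This contradicts the maximality of $T^\ast$ unless $T^\ast=\infty$, which is the desired bound; the remaining conclusions then follow from the reduction described in the first paragraph.
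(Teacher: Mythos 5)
Your reduction runs in the wrong direction, and the step you yourself flag as ``the delicate point'' is a genuine, unclosed gap. Inside your bootstrap window you only know $\int_0^{T^\ast}\bigl(\|\zeta\|_2^2-\tfrac1N\sum_j\Re[\langle\phi_j,\zeta\rangle^2]\bigr)\dt<\infty$ (from integrating \eqref{e:mon}), while the drift of the centers of mass is controlled, even with the $\Sigma$-bounds, by the time integral of roughly the \emph{square root} of this defect; integrability of the defect gives no bound on that integral, and your proposed repair --- ``simultaneously propagating the exponential decay of both the defect and the relative mean momenta'' --- is exactly the hard estimate, which is never carried out. Worse, the ingredients you want to import from \thm{t:synch1} cannot be run inside a bootstrap window: Barbalat's Lemma \ref{BL} and the step ``there exists a time $T$ after which $\Re\langle\phi_+,\phi_-\rangle\geq C>0$'' are asymptotic statements, so they neither apply on a finite maximal interval $[0,T^\ast)$ nor produce constants uniform in $T^\ast$; and for a genuinely heavy-tailed (non-absolute) kernel the bound $m=h(\cD^\ast)$ degenerates as the margin $\cD^\ast$ grows, so there is no evident way to win the race against $\cD^\ast-\cD(0)$. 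The classical Cucker--Smale bookkeeping you appeal to does not transfer, because here the $x_j$'s are not velocities undergoing alignment --- their motion is slaved to the Schr\"odinger dynamics.

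The paper avoids all of this because, under the wedge condition, the diameter bound is a \emph{consequence} of synchronization rather than a prerequisite. The computation you use only at the boundary set $\Re\langle\phi_+,\phi_-\rangle=0$ in fact shows more: for the minimizing pair, \eqref{e:wedgeineq} together with the opposite-sides sign structure makes \emph{every} term in $\ddt\Re\langle\phi_+,\phi_-\rangle$ nonnegative throughout the wedge, and the coefficients $\th_\pm/\l_\pm$, $1/\l_\pm$ are positive no matter what the masses, the $\th_j$'s, or the diameter do. Hence the minimal correlation is monotone and converges to $1$ with no a priori control whatsoever. The exponential rate then follows from the same add-and-subtract bootstrap, where the potentially unbounded factor $1/\l_\mp$ is compensated by $\|\zeta\|_2\geq\Re\langle\zeta,\phi_\mp\rangle\geq\l_\mp/N$ (valid under the wedge, since the $\theta_j$'s stay in the convex hull of their initial values), so the Gronwall constant can be taken uniform in time. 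Only \emph{after} exponential synchronization does the paper deduce aggregation of the $x_j$, hence $h(|x_j-x_k|)\geq m>0$, hence the exponential alignment \eqref{align} and finally the mass bounds $\l_+(t)\leq\l_+(0)\exp\left(\frac{k}{2m\mu}|\th_+-1|(0)\right)$ and $\rho_\infty<\infty$ --- exactly the reverse of your order of deductions, and this reversal is what eliminates the need for any bootstrap on $\cD(t)$.
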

\begin{proof}
First we note that we still have \eqref{e:mon} and therefore $\|\zeta\|_2\to \rho_{\infty}$. However, with the heavy tail kernel we do not immediately have $\th_j\to1$ exponentially in time, and thus do not have an a priori bound on the mass of each oscillator. Therefore, we first must show that $|x_j-x_k|(t)$ remains bounded for all time (in fact we show $|x_j-x_k|\to 0$).\\
 
Now just as in the $N=2$ case we observe,
\begin{align}
\ddt \Re\langle \phi_+,\phi_-\rangle&=\frac{k\th_+}{2\l_+}(\Re\langle \zeta,\phi_-\rangle-\Re\langle \zeta,\phi_+\rangle \Re\langle \phi_+,\phi_-\rangle)+\frac{k}{2\l_+}\Im \langle \phi_+,\zeta\rangle\Im\langle\phi_+,\phi_-\rangle \nonumber\\
&+\frac{k\th_-}{2\l_-}(\Re\langle \zeta,\phi_+\rangle-\Re\langle \zeta,\phi_-\rangle \Re\langle \phi_+,\phi_- \rangle)+\frac{k}{2\l_-}\Im \langle \zeta,\phi_-\rangle\Im\langle\phi_+,\phi_-\rangle\nonumber
\end{align}
where $\phi_{\pm}$ is where $\min_{j,k} \Re\langle \phi_j,\phi_k\rangle$ is achieved.

Now since $\Re\langle \phi_+,\phi_-\rangle\geq 0$, we also have, $\Re\langle \zeta,\phi_{\pm}\rangle\geq c>0$ and further, we have \eqref{e:wedgeineq}.

Now, since $\phi_{\pm}$ minimizes the real part of the correlation between two oscillator directions, it follows that $\phi_+$ and $\phi_-$ lie on opposite sides with respect to the order parameter $\zeta$. Without loss of generality, assume $\Im\langle \phi_+,\zeta\rangle\geq 0$, then $\Im\langle \zeta,\phi_-\rangle\geq0$ and $\Im\langle \phi_+,\phi_-\rangle\geq 0$. Therefore, $\ddt \Re \langle\phi_+,\phi_-\rangle \geq 0$, which implies that not only is \eqref{e:wedge} preserved in time, but that coupled with $\Re\langle \phi_+,\phi_-\rangle\leq 1$, and $\ddt \Re\langle \phi_+,\phi_-\rangle > 0$ if $\Re\langle \phi_+,\phi_-\rangle \neq \pm 1$, we have $\Re\langle \phi_+,\phi_-\rangle \to 1$.  Thus all the directions synchronize so that $\phi_j=\frac{\psi_j}{\|\psi_j\|_2}\to\frac{\zeta}{\|\zeta\|_2}$ for all $j$. To see that this in fact occurs at an exponential rate, we add and subtract appropriately, and use \eqref{e:wedgeineq},
\begin{align}
\ddt \Re\langle \phi_+,\phi_-\rangle&=\frac{k\th_+}{2\l_+}(\Re\langle \zeta,\phi_-\rangle-\Re\langle \zeta,\phi_+\rangle \Re\langle \phi_+,\phi_-\rangle)+\frac{k}{2\l_+}\Im \langle \phi_+,\zeta\rangle\Im\langle\phi_+,\phi_-\rangle \nonumber\\
&+\frac{k\th_-}{2\l_-}(\Re\langle \zeta,\phi_+\rangle-\Re\langle \zeta,\phi_-\rangle \Re\langle \phi_+,\phi_- \rangle)+\frac{k}{2\l_-}\Im \langle \zeta,\phi_-\rangle\Im\langle\phi_+,\phi_-\rangle\nonumber\\
&\geq \frac{k}{2}(1-\Re\langle \phi_+,\phi_-\rangle)(\frac{\th_+}{\l_+}\Re\langle \zeta,\phi_-\rangle+\frac{\th_-}{\l_-}\Re\langle \zeta, \phi_+\rangle)\\
&+\frac{k}{2}\Re\langle \phi_+,\phi_-\rangle(\Re\langle \zeta,\phi_-\rangle-\Re\langle\zeta,\phi_+\rangle)(\frac{\th_+}{\l_+}-\frac{\th_-}{\l_-})\nonumber\\
\end{align}
If $\frac{\th_+}{\l_+}\geq \frac{\th_-}{\l_-}$, then,
\begin{align}
\ddt \Re\langle \phi_+,\phi_-\rangle&\geq \frac{k}{2}\|\zeta\|_2(1-\Re\langle \phi_+,\phi_-\rangle)(\frac{\th_-}{\l_-}(\Re\langle \phi_+,\phi_-\rangle+\Re\langle\frac{\zeta}{\|\zeta\|_2},\phi_+\rangle) +\frac{\th_+}{\l_+}(\Re\langle \frac{\zeta}{\|\zeta\|_2}, \phi_-\rangle-\Re\langle \phi_+,\phi_-\rangle))\nonumber\\
&\geq\frac{k}{2}\|\zeta\|_2(1-\Re\langle \phi_+,\phi_-\rangle)(\frac{\th_-}{\l_-}(\Re\langle \phi_+,\phi_-\rangle+\Re\langle\frac{\zeta}{\|\zeta\|_2},\phi_+\rangle))
\end{align}
and if $\frac{\th_+}{\l_+}<\frac{\th_-}{\l_-}$, then
\begin{align}
\ddt \Re\langle \phi_+,\phi_-\rangle&\geq \frac{k}{2}\|\zeta\|_2(1-\Re\langle \phi_+,\phi_-\rangle)(\frac{\th_+}{\l_+}(\Re\langle \phi_+,\phi_-\rangle+\Re\langle\frac{\zeta}{\|\zeta\|_2},\phi_-\rangle) +\frac{\th_-}{\l_-}(\Re\langle \frac{\zeta}{\|\zeta\|_2}, \phi_+\rangle-\Re\langle \phi_+,\phi_-\rangle))\nonumber\\
&\geq\frac{k}{2}\|\zeta\|_2(1-\Re\langle \phi_+,\phi_-\rangle)(\frac{\th_+}{\l_+}(\Re\langle \phi_+,\phi_-\rangle+\Re\langle\frac{\zeta}{\|\zeta\|_2},\phi_-\rangle))
\end{align}
In either case we have,
\begin{align}
\ddt (&1-\Re\langle \phi_+,\phi_-\rangle)\leq -c(1-\Re\langle \phi_+,\phi_-\rangle),\nonumber\\
(&1-\Re\langle \phi_+,\phi_-\rangle)(t)\leq (1-\Re\langle \phi_+,\phi_-\rangle(0))e^{-ct}.
\end{align}

With exponential synchronization comes aggregation of the centers of masses, $x_j \to \frac{1}{\rho_\infty}\int_{\R^d}x|\zeta|^2\dx$, exponentially fast as well.

Returning to the Cucker-Smale dynamics,
\begin{align}
\dot{\th_j}&=\frac{\mu}{N}\sum_{k=1}^Nh(|x_j-x_k|)(\th_k-\th_j), \ \ \mu > 0, \nonumber\\
x_j&=\frac{1}{\|\psi_j\|_2^2}\int_{\mathbb{R}^d} x|\psi_j(x,t)|^2 \dx, \nonumber
\end{align}
since the center of masses aggregate, asymptotically $h(|x_j-x_k|) \sim h(0)$, but in particular $h(|x_j-x_k|)\geq m>0$ for all time $t>0$. Hence, we have \eqref{align}.

We can now establish quantitative bounds on the growth of each oscillator. The bounds go just as in the absolute kernel, except rather than having $c=\inf h(r)$ determined by the initial conditions, we have $m=\min_{t,k,j} h(|x_k-x_j|)$.
\begin{align}
\l_+(t)\leq \l_+(0)\exp\left(\frac{k}{2m\mu}|\th_+-1|(0)\right).
\end{align}

Therefore each oscillator direction synchronizes exponentially fast to the direction of the order parameter, and
\begin{align}
    \lim_{t\to\infty} \|\zeta\|_2=\frac{1}{N}\sum_{j=1}^N\bar{\l}_j<\infty.
\end{align}

\end{proof}
This concludes the analysis of Model 1.

\section{Model 2: A generalization of the Schr\"odinger-Lohe Model}\label{sec:m2}
The second variation we consider is given by
\begin{align}\label{model2}
i\partial_t \psi_j&=H\psi_j+\frac{ik}{2N}\sum_{l=1}^N\left(\th_j\psi_l-\langle \psi_l,\psi_j\rangle \psi_j\right),\nonumber\\
\psi_j(x,0)&=\psi_j^0(x), \ \ j=1,\dots,N,\\
\dot{\th_j}&=\frac{\mu}{N}\sum_{k=1}^Nh(|x_j-x_k|)(\th_k-\th_j), \ \ \mu \geq 0, \nonumber\\
x_j&=\frac{1}{\|\psi_j\|_2^2}\int_{\mathbb{R}^d} x|\psi_j(x,t)|^2 \dx, \nonumber
\end{align}

Then for the order parameter $\zeta=\frac{1}{N}\sum_{l=1}^N\psi_l$, the equation becomes
\begin{align}
i\partial_t \psi_j=H\psi_j+\frac{ik}{2}\left(\th_j\zeta-\langle \zeta,\psi_j\rangle \psi_j\right).
\end{align}
and further the equation for $\zeta$ itself is
\begin{align}
i\partial_t \zeta=H\zeta+\frac{ik}{2}\left(\bar{\th}\zeta-\frac{1}{N}\sum_{j=1}^N\langle \zeta,\psi_j\rangle \psi_j\right),
\end{align}
where $\bar{\th}=\frac{1}{N}\sum_{j=1}^N\th_j$. The first of two important distinctions from the standard Schrodinger-Lohe model that should be made is the absence of normalization, so that
\begin{align}
\ddt \|\psi_j\|_2^2=k\Re\langle \zeta, \psi_j \rangle(\th_j-\|\psi_j\|_2^2),
\end{align}
thus the mass of each oscillator is not conserved, but rather, through the synchronization process, i.e., when $\Re\langle \zeta, \psi_j \rangle>0$, the square of the $L^2$ norm of $\psi_j$ approaches the value $\th_j$. Indeed, this is why in Proposition \ref{prop:gwp2}, we restrict ourselves to two sets of potential initial configurations. The presence of the parameters, $\th_j$, are the second distinction from the usual Schr\"odinger-Lohe model. Each $\th_j(t)>0$ represents an intrinsic desired mass for a given oscillator, and their motion will be governed by the Cucker-Smale dynamics,
\begin{align}
\dot{\th}_j&=\frac{\mu}{N}\sum_{l=1}^Nh(|x_l-x_j|)(\th_l-\th_j), \ \ \mu\geq 0,\\
x_j(t)&=\frac{1}{\|\psi_j\|_2^2}\int_{\R^d} x|\psi_j(x,t)|^2 \dx, \nonumber
\end{align}
with $h$ a radially symmetric heavy tail kernel, $\mu \geq 0$ a measure of coupling strength, where the case of $\mu=0$ represents each $\th_j$ remaining fixed, and $x_j$ the center of mass of an oscillator $\psi_j$.
Note that for $\th_j \equiv 1$ and $\|\psi_j^0\|_2=1$, for all $j$, the system reduces to the standard Schr\"odinger-Lohe model for quantum synchronization.

Again, since the mass of each oscillator is no longer conserved, it is useful to break each oscillator up into a part that measures the mass, and one that measures its direction. 
\begin{align}
\psi_j(x,t)&=\lambda_j(t) \phi_j(x,t),\\
\lambda_j(t)&=\|\psi_j(t)\|_2.  \nonumber
\end{align}
Then $\ddt \|\phi_j(x,t)\|_2=0$, and $\|\phi_j(x,t)\|_2\equiv 1$. Then the equations for $\phi_j$ and $\lambda_j$ are given by,
\begin{equation}\label{eq:m2new}\begin{aligned}
\dot\lambda_j=&\,\frac{k}{2}\Re\langle \zeta, \phi_j \rangle (\th_j-\lambda_j^2),\\
i\partial_t\phi_j=&\,\left(H+\frac{k}{2}\l_j\Im\langle\zeta,\phi_j\rangle\right)\phi_j+i\frac{k\th_j}{2\l_j}\left(\zeta-\Re\langle\zeta,\phi_j\rangle\phi_j\right).
\end{aligned}\end{equation}

Let us state the main theorem for this section:

\begin{theorem}\label{m2synch}
Global solutions to \eqref{model2} in the classes and with initial configurations as defined in Proposition \ref{prop:gwp2} undergo either exponential full synchronization (guaranteed for initial data satisfying the wedge condition \eqref{e:wedge}) or unstable synchronization to a bipolar state (only possible if $\|\psi_{j0}\|_2^2=\th_j$ for some $j$).
\end{theorem}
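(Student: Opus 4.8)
The plan is to synthesize the full-synchronization result under the wedge condition with a steady-state analysis that isolates the bipolar configurations, using the mass dynamics as the decisive new tool. Throughout, Proposition~\ref{prop:gwp2} guarantees that for initial data in either admissible class the solution is global and the decomposition $\psi_j=\l_j\phi_j$ of \eqref{eq:m2new} is valid for all $t\geq0$ with masses bounded above and below, so that every quantity below is well defined.

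\textbf{Full synchronization under the wedge condition.} Here I would follow the monotonicity scheme already used for Model~1 in Theorem~\ref{thm:synch3}. Let $\phi_+,\phi_-$ realize $\min_{j,k}\Re\langle\phi_j,\phi_k\rangle$ and differentiate this minimal correlation using the $\phi_j$-equation in \eqref{eq:m2new}; the resulting identity has exactly the structure of the $N=2$ Model~2 computation, i.e. a manifestly nonnegative term proportional to $1-(\Re\langle\phi_+,\phi_-\rangle)^2$ together with cross terms carrying $\Im\langle\phi_+,\phi_-\rangle$. Under \eqref{e:wedge} one has $\Re\langle\zeta,\phi_\pm\rangle\geq0$ and \eqref{e:wedgeineq}, and since $\phi_+,\phi_-$ lie on opposite sides of $\zeta$ the imaginary cross terms carry a definite sign; this yields $\ddt\Re\langle\phi_+,\phi_-\rangle\geq0$, so the wedge is preserved and $\Re\langle\phi_+,\phi_-\rangle\to1$. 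Once directions align the profiles $|\phi_j|^2$ asymptotically coincide, the centers of mass aggregate, and the Heavy Tail condition upgrades this to $h(|x_j-x_k|)\geq m>0$, hence $\th_j\to\bar\th=1$ via \eqref{align}. Feeding $\Re\langle\zeta,\phi_\pm\rangle\geq c>0$ back into the correlation identity produces the Gr\"onwall inequality $\ddt(1-\Re\langle\phi_+,\phi_-\rangle)\leq -c(1-\Re\langle\phi_+,\phi_-\rangle)$, giving exponential full synchronization; the mass equation $\ddt\l_j^2=k\l_j\Re\langle\zeta,\phi_j\rangle(\th_j-\l_j^2)$ then forces $\l_j^2\to\th_j\to1$.

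\textbf{Steady states and the bipolar obstruction.} For the remaining data (in the class \eqref{eq:c2} but violating the wedge) I would classify the admissible limits. Exactly as in the Model~1 steady-state discussion, any equilibrium other than the incoherent state $\zeta=0$ must have each $\phi_j$ aligned with $\pm\zeta/\|\zeta\|_2$, yielding either full synchronization ($+$ for all $j$) or a bipolar configuration (some $j$ with $\phi_j\to-\zeta/\|\zeta\|_2$). The new ingredient, absent in Model~1, is the mass flow: for an antipodal oscillator $\Re\langle\zeta,\phi_j\rangle\to-\|\zeta\|_2<0$, so near the limit
\begin{equation*}
\ddt\l_j^2=k\l_j\Re\langle\zeta,\phi_j\rangle(\th_j-\l_j^2)
\end{equation*}
makes $\l_j^2=\th_j$ a \emph{repulsive} fixed point: $\l_j^2$ is driven monotonically to $0$ or to $+\infty$ unless $\l_j^2\equiv\th_j$. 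Divergence contradicts the global bounds, while $\l_j^2\to0$ makes $\psi_j$ vanish rather than populate the antipode. Hence a genuine bipolar limit with finite nonzero mass can occur only in the marginal case $\l_j^2(t)\equiv\th_j$, which in particular requires $\|\psi_{j0}\|_2^2=\th_j$ (and, since $\dot\th_j$ would otherwise unbalance this identity, the frozen regime $\mu=0$ treated in Proposition~\ref{ncssynch}). Instability then follows as in Model~1: flipping one antipodal oscillator strictly increases $\|\zeta\|_2$ by the reverse triangle inequality, so the bipolar configuration is not stable and generic perturbations drive the system to full synchronization.

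\textbf{Main obstacle.} The crux is that, unlike Model~1, the order parameter is \emph{not} monotone for Model~2: one computes $\ddt\|\zeta\|_2^2=k\bigl(\bar\th\|\zeta\|_2^2-\tfrac1N\sum_j\l_j^2\Re[\langle\phi_j,\zeta\rangle^2]\bigr)$, and the weights $\l_j^2$ no longer match $\bar\th$, so the Cauchy--Schwarz argument that rendered $\|\zeta\|_2^2$ nondecreasing in \eqref{e:mon} breaks down. Consequently I cannot extract convergence to steady states from a single Barbalat argument on $\|\zeta\|_2$, as was done for Model~1; the convergence and the dichotomy must instead be assembled from the coupled mass--direction system. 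The delicate point is precisely the control of the antipodal mass above, which is what pins the bipolar alternative to the degenerate condition $\|\psi_{j0}\|_2^2=\th_j$ and makes full synchronization the generic outcome.
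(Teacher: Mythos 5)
Your wedge-condition half follows the paper's own route (Proposition~\ref{mod2sync}) and is essentially correct. The genuine gap is in the other half, and it originates in your ``Main obstacle'' paragraph, which gets the key structural fact backwards. You claim that for Model 2 the Cauchy--Schwarz argument behind \eqref{e:mon} ``breaks down'' because the weights $\l_j^2$ no longer match $\bar\th$, and you therefore renounce the Barbalat argument on $\|\zeta\|_2$. But in exactly the regime where the bipolar alternative has to be proved --- the class \eqref{eq:c2} with frozen parameters, treated in Proposition~\ref{ncssynch} --- the constraint $\l_j^2(t)\leq\th_j$ is propagated in time (at $\l_j^2=\th_j$ the mass equation gives $\dot\l_j=0$, and $\th_j$ is constant), and this is precisely what restores the estimate
\begin{equation*}
\frac1N\sum_{l=1}^N\l_l^2\,\Re[\langle\phi_l,\zeta\rangle]^2\;\leq\;\frac1N\sum_{l=1}^N\th_l\|\zeta\|_2^2\;=\;\bar\th\|\zeta\|_2^2,
\end{equation*}
hence $\ddt\|\zeta\|_2^2\geq0$ after all. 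The paper's proof runs entirely on this restored monotonicity: $\|\zeta\|_2$ is nondecreasing and bounded by $\sqrt{\th_M}$, Barbalat gives $\ddt\|\zeta\|_2^2\to0$, and rewriting the derivative as the sum of the two nonnegative terms $\frac1N\sum_l(\th_l-\l_l^2)\|\zeta\|_2^2$ and $\frac1N\sum_l\l_l^2\bigl(\|\zeta\|_2^2-\Re[\langle\phi_l,\zeta\rangle]^2\bigr)$ forces both $\l_l^2\to\th_l$ and $\phi_l\to\pm\zeta/\|\zeta\|_2$. By discarding this tool without replacing it, your proposal never establishes that solutions outside the wedge converge at all: classifying equilibria (your ``admissible limits'') does not show that the $\omega$-limit set consists of equilibria, and your statement that convergence ``must instead be assembled from the coupled mass--direction system'' is a promise, not an argument. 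This is the missing idea, and it is the heart of the theorem's dichotomy.

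A secondary weakness: your exclusion of antipodal oscillators when $\l_j^2(0)<\th_j$ is heuristic. Calling $\l_j^2=\th_j$ repulsive and saying the mass is driven to $0$ or $+\infty$ is not by itself a contradiction; under \eqref{eq:c2} with $\mu=0$ divergence is impossible anyway (the bound $\l_j^2\leq\th_j$ is preserved), and decay of the mass to $0$ must be ruled out, not dismissed as ``vanishing rather than populating the antipode.'' The paper does this rigorously: if $\phi_j\to-\zeta/\|\zeta\|_2$, then $\frac{k}{2}\Re\langle\zeta,\phi_j\rangle\leq C<0$ for $t\geq T$, and an explicit comparison for the Riccati-type inequality $\dot\l_j\leq C(\th_j-\l_j^2)$ forces $\l_j$ to reach $0$ at a finite time $t^*$, where $\dot\l_j(t^*)\leq C\th_j<0$ contradicts $\l_j\geq0$ (it also contradicts $\l_j^2\to\th_j$, which the Barbalat step has already produced). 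Your instability argument via the reverse triangle inequality is fine, as in Model 1, but without the convergence step above, the dichotomy ``full synchronization or bipolar, the latter only if $\|\psi_{j0}\|_2^2=\th_j$ for some $j$'' remains unproven.
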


We split the analysis of this model into two cases: first with the wedge condition \eqref{e:wedge}.

\subsection{Synchronization on a wedge}
For Model 2, the synchronization results that we can prove are identical for both the absolute and heavy tail kernels, as it is no longer necessary that $\th_j \to 1$ in order to control the mass of each oscillator. Indeed, the Cucker-Smale dynamics can be turned on or off in this section ($\mu \geq 0$). 
\begin{proposition}\label{mod2sync}
The system \eqref{model2} with $\mu\geq 0$ and $h(r)$ satisfying the heavy tail condition \eqref{HT} undergoes exponential synchronization for initial conditions satisfying the wedge condition \eqref{e:wedge},
\begin{align}
    1-\Re\langle \phi_j,\phi_k\rangle\lesssim e^{-Ct}, \ \ \forall j,k.
\end{align}
If $\mu>0$ We further have $\th_j \to \bar{\th}=1$, $\l_j \to 1$, exponentially fast, as well as aggregation of centers of mass, $x_j  \to \frac{1}{\rho_\infty}\int_{\mathbb{R}^d}x|\zeta|^2 \dx$.
\end{proposition}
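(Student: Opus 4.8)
The plan is to run, for the decomposed system \eqref{eq:m2new}, the same extremal-pair monotonicity-and-bootstrap scheme that was used for Model 1 in the proof of Theorem \ref{thm:synch3}; the only structural change is that the coefficients $\frac{k}{2\l_j}$ multiplying the imaginary terms become $\frac{k}{2}\l_j$, which affects no sign consideration. The essential conceptual difference is that, in contrast with Model 1, we do not need the a priori alignment $\th_j\to1$ to control the masses: Proposition \ref{prop:gwp2} already supplies global solutions with $\l_j$ bounded above, while under the wedge condition the masses stay bounded away from zero. Indeed $\th_j(t)\geq\min_k\theta_{k,0}>0$ is preserved by the consensus dynamics, and $\dot\lambda_j=\frac{k}{2}\Re\langle\zeta,\phi_j\rangle(\th_j-\l_j^2)$ with $\Re\langle\zeta,\phi_j\rangle>0$ (see below) pulls each $\l_j$ toward $\sqrt{\th_j}$, trapping it in a compact subinterval of $(0,\infty)$. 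I would therefore begin the argument uniformly in $\mu\geq0$, noting that the synchronization of \emph{directions} uses neither the kernel $h$ nor the value of $\mu$.

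First I would select, at each time, the pair $\phi_+,\phi_-$ realizing $\min_{j,k}\Re\langle\phi_j,\phi_k\rangle$ and compute from \eqref{eq:m2new}, exactly as in the $N=2$ case,
\begin{align*}
\ddt\Re\langle\phi_+,\phi_-\rangle&=\frac{k\th_+}{2\l_+}(\Re\langle\zeta,\phi_-\rangle-\Re\langle\zeta,\phi_+\rangle\Re\langle\phi_+,\phi_-\rangle)+\frac{k}{2}\l_+\Im\langle\phi_+,\zeta\rangle\Im\langle\phi_+,\phi_-\rangle\\
&\quad+\frac{k\th_-}{2\l_-}(\Re\langle\zeta,\phi_+\rangle-\Re\langle\zeta,\phi_-\rangle\Re\langle\phi_+,\phi_-\rangle)+\frac{k}{2}\l_-\Im\langle\zeta,\phi_-\rangle\Im\langle\phi_+,\phi_-\rangle.
\end{align*}
Under the wedge condition \eqref{e:wedge} the diagonal term gives $\Re\langle\zeta,\phi_j\rangle\geq\frac{\l_j}{N}>0$, so $\|\zeta\|_2$ is bounded below; assuming without loss of generality $\Im\langle\phi_+,\zeta\rangle\geq0$ (whence $\Im\langle\zeta,\phi_-\rangle\geq0$ and $\Im\langle\phi_+,\phi_-\rangle\geq0$, since the extremal pair sits on opposite sides of $\zeta$) the two imaginary terms are nonnegative, and by \eqref{e:wedgeineq} the two real terms are nonnegative as well. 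Hence $\ddt\Re\langle\phi_+,\phi_-\rangle\geq0$: the minimal correlation is non-decreasing, which simultaneously propagates \eqref{e:wedge} in time and forces $\Re\langle\phi_+,\phi_-\rangle\to1$ (the value $-1$ being excluded by the wedge). Repeating the add-and-subtract manipulation of Theorem \ref{thm:synch3} verbatim, and using that $\th_\pm/\l_\pm$ is bounded below (as $\th_j\geq\theta_{\min}>0$ and $\l_j$ is bounded above), I would obtain $\ddt(1-\Re\langle\phi_+,\phi_-\rangle)\leq-C(1-\Re\langle\phi_+,\phi_-\rangle)$ once $\Re\langle\phi_+,\phi_-\rangle\geq c>0$, giving the claimed exponential synchronization of all directions.

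It then remains to close the $\mu>0$ statements. Exponential synchronization gives $\psi_j/\l_j\to\zeta/\|\zeta\|_2$, hence $x_j\to\frac{1}{\rho_\infty}\int_{\R^d}x|\zeta|^2\dx$ exponentially, so $\cD(t)$ stays bounded and $h(|x_i-x_j|)\geq m>0$ for all $t$; feeding this into the Cucker-Smale estimate yields $\th_j\to\bar\th=1$ at exponential rate as in \eqref{align}, after which $\l_j^2\to\th_j\to1$ follows from $\dot\lambda_j=\frac{k}{2}\Re\langle\zeta,\phi_j\rangle(\th_j-\l_j^2)$ by the same Duhamel/Grönwall step as in the $N=2$ case. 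I expect the main obstacle to lie not in the monotonicity estimate itself but in the reversed logical ordering and its self-consistency: whereas for Model 1 the mass bounds were a prerequisite obtained from $\th_j\to1$, here direction synchronization must be established \emph{first}, relying only on the wedge-induced lower bound for $\Re\langle\zeta,\phi_j\rangle$ and the mass bounds of Proposition \ref{prop:gwp2}, and only afterwards can aggregation of centers of mass and the alignment $\th_j\to1$ be derived. The delicate point to verify carefully is that, throughout this process, the masses never degenerate—staying bounded away from both $0$ and $\infty$—so that the coefficients $\th_\pm/\l_\pm$ remain controlled.
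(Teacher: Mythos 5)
Your proposal is correct and follows essentially the same route as the paper: extremal-pair monotonicity of $\Re\langle\phi_+,\phi_-\rangle$ under the wedge condition (with the same opposite-sides sign argument for the imaginary terms), the add-and-subtract bootstrap of Theorem \ref{thm:synch3} to obtain the exponential rate, and then aggregation of centers of mass feeding the Cucker--Smale alignment $\th_j\to\bar\th=1$ and $\l_j^2\to1$ when $\mu>0$. The only cosmetic difference is that you run the bootstrap uniformly in $\mu\geq0$ using the mass bounds alone—which is exactly what the paper does in its $\mu=0$ branch—whereas for $\mu>0$ the paper first establishes $\th_j,\l_j\to1$ exponentially and then closes with a Duhamel perturbation argument; both orderings are valid.
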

First we note that \eqref{e:wedge} is preserved in time.  Indeed, letting $\phi_+,\phi_-$ denote the two oscillator directions that minimize $\Re\langle \phi_j,\phi_k\rangle$ we see, utilizing Rademacher's lemma to differentiate,
\begin{align}
\ddt \Re \langle \phi_+,\phi_-\rangle&=\frac{k}{2}\frac{\th_+}{\l_+}(\Re\langle \zeta,  \phi_-\rangle-\Re\langle \zeta,  \phi_+\rangle\Re\langle \phi_+,  \phi_-\rangle)+\frac{k}{2}\lambda_+\Im\langle \phi_+,\zeta\rangle \Im\langle \phi_+,\phi_-\rangle\\
&+\frac{k}{2}\frac{\th_-}{\l_-}(\Re\langle \zeta,  \phi_+\rangle-\Re\langle \zeta,  \phi_-\rangle\Re\langle \phi_+,  \phi_-\rangle)+\frac{k}{2}\lambda_-\Im\langle \zeta, \phi_-\rangle \Im\langle \phi_+,\phi_-\rangle
\end{align}

Now, since $\Re\langle \phi_j,\phi_k\rangle\geq0$ for all $j,k$, $\Re\langle \zeta,\phi_j\rangle\geq c>0$, for all $j$ as well. Further, since $\phi_{\pm}$ minimizes $\Re\langle \phi_j,\phi_k\rangle,$ and without loss of generality, assuming $\Im\langle \phi_+,\zeta\rangle\geq0,$ then $\Im\langle \phi_+,\phi_-\rangle,\Im\langle \zeta,\phi_-\rangle\geq0.$. Therefore we also have the following inequalities,
\begin{align}
\|\zeta\|_2\Re\langle \phi_+,\phi_-\rangle\leq \Re\langle \zeta,\phi_{\pm}\rangle\leq \|\zeta\|_2
\end{align}

Hence,
\begin{align}
\ddt \Re \langle \phi_+,\phi_-\rangle\geq 0,
\end{align}
preserving \eqref{e:wedge}, and further,
\begin{align}
\Re\langle \phi_+,\phi_-\rangle\leq 1,
\end{align}
Implying, $\Re\langle \phi_+,\phi_-\rangle \to 1$ since $\ddt\Re \langle \phi_+,\phi_-\rangle > 0$, unless $\Re\langle \phi_+,\phi_-\rangle =\pm 1$.\\

Therefore, $\phi_j \to \frac{\zeta}{\|\zeta\|}$ for all $j$, which in turn implies aggregation of the centers of mass, $x_j \to \frac{1}{\rho_\infty}\int_{\mathbb{R}^d}x|\zeta|^2 \dx$. In particular this means that for $\mu>0$ the influence function $h(r)$ in the Cucker-Smale dynamics is bounded from below,  $\min_{t,j,k} h(|x_j-x_k|)=m>0.$ Hence, exponential alignment of each $\th_j$ occurs, as in \eqref{align}.

Continuing, to see that $\l_j \to 1$, we note  that since $\Re\langle \phi_+,\phi_-\rangle \to 1$, we have $\Re\langle \zeta,\phi_j\rangle(t)\geq c>0$ for all $t\geq T$. Thus,
\begin{align}
\ddt (\th_j-\l_j^2)&=\frac{\mu}{N}\sum_{k=1}^Nh(|x_k-x_j|)(\th_k-\th_j)-k\l_j\Re\langle \zeta,\phi_j\rangle(\th_j-\l_j^2)\\
&\leq \max_j|\th_j-\bar{\th}|(0)e^{-\mu mt}-c(\th_j-\l_j^2),
\end{align}
and Duhamel's Principle lets us conclude $\th_j-\l_j^2\to 0$ exponentially fast, and hence $\l_j^2 \to \bar{\th}=1$ as well.\\

Lastly,  we can now bootstrap the rate of synchronization of oscillatory directions to an exponential rate. Indeed, as $\Re\langle \phi_+,\phi_-\rangle\geq 0$ for all time, and with $\l_j \to 1, \th_j \to 1$ exponentially fast,
\begin{align}
\ddt(1-\Re\langle \phi_+,\phi_-\rangle)=&-k\frac{\th_+}{\l_+}(\Re\langle \zeta,  \phi_-\rangle-\Re\langle \zeta,  \phi_+\rangle\Re\langle \phi_+,  \phi_-\rangle)-k\lambda_+\Im\langle \phi_+,\zeta\rangle \Im\langle \phi_+,\phi_-\rangle\\
&-k\frac{\th_-}{\l_-}(\Re\langle \zeta,  \phi_+\rangle-\Re\langle \zeta,  \phi_-\rangle\Re\langle \phi_+,  \phi_-\rangle)-k\lambda_-\Im\langle \zeta, \phi_-\rangle \Im\langle \phi_+,\phi_-\rangle\\
&\leq -k(\Re\langle \zeta,\phi_+\rangle+\Re\langle \zeta,\phi_-\rangle)(1-\Re\langle \phi_+,\phi_-\rangle)\\
&+k(\Re\langle \zeta,  \phi_-\rangle-\Re\langle \zeta,  \phi_+\rangle\Re\langle \phi_+,  \phi_-\rangle)(1-\frac{\th_+}{\l_+})\\
&+k(\Re\langle \zeta,  \phi_+\rangle-\Re\langle \zeta,  \phi_-\rangle\Re\langle \phi_+,  \phi_-\rangle)(1-\frac{\th_-}{\l_-})
\end{align}
Now as $(1-\frac{\th_+}{\l_+})$ and $(1-\frac{\th_-}{\l_-})$ both converge to zero exponentially fast, and $\Re\langle \zeta,\phi_{\pm}\rangle\geq c>0$ for all time, we conclude by Duhamel's principle that $\Re\langle \phi_+,\phi_-\rangle \to 1$ also at an exponential rate.\\

Then for $\mu=0$. Let $\phi_{\pm}$ represent the the two oscillators such that $\min_{jk}\Re\langle \phi_j,\phi_k\rangle$ is achieved. Then, (using Rademacher's lemma)
\begin{align}
\ddt \Re \langle \phi_+,\phi_-\rangle&=\frac{k}{2}\frac{\th_+}{\l_+}(\Re\langle \zeta,  \phi_-\rangle-\Re\langle \zeta,  \phi_+\rangle\Re\langle \phi_+,  \phi_-\rangle)+\frac{k}{2}\lambda_+\Im\langle \phi_+,\zeta \rangle \Im\langle \phi_+,\phi_-\rangle\\
&+\frac{k}{2}\frac{\th_-}{\l_-}(\Re\langle \zeta,  \phi_+\rangle-\Re\langle \zeta,  \phi_-\rangle\Re\langle \phi_+,  \phi_-\rangle)+\frac{k}{2}\lambda_-\Im\langle \zeta, \phi_-\rangle \Im\langle \phi_+,\phi_-\rangle
\end{align}

Now, since $\Re\langle \phi_j,\phi_k\rangle\geq0$ for all $j,k$, $\Re\langle \zeta,\phi_j\rangle\geq c>0$, for all $j$ as well. Further, since $\phi_{\pm}$ minimizes $\Re\langle \phi_j,\phi_k\rangle,$ and without loss of generality, assuming $\Im\langle \phi_+,\zeta\rangle\geq0,$ then $\Im\langle \phi_+,\phi_-\rangle,\Im\langle \zeta,\phi_-\rangle\geq0.$ Therefore we also have the following inequalities,
\begin{align}
\|\zeta\|_2\Re\langle \phi_+,\phi_-\rangle\leq \Re\langle \zeta,\phi_{\pm}\rangle\leq \|\zeta\|_2
\end{align}

Hence,
\begin{align}
\ddt \Re \langle \phi_+,\phi_-\rangle&\geq 0,\\
\Re\langle \phi_+,\phi_-\rangle&\leq 1.
\end{align}
Therefore, $\Re\langle \phi_+,\phi_-\rangle \to 1$ and $\ddt\Re \langle \phi_+,\phi_-\rangle \to 0$.

Now, $\Re\langle \zeta,\phi_j\rangle\geq c>0$ for all $t>0$, and since
\begin{align}
\ddt(\th_j-\l^2_j)&=-k\Re\langle \zeta,\psi_j\rangle(\th_j-\l_j^2),
\end{align}
if $\th_j>\l_j^2$,
\begin{align}
\ddt(\th_j-\l^2_j)&\leq -kc\l_j(\th_j-\l_j^2).
\end{align}
Thus $\th_j-\l_j^2 \to 0$ exponentially fast. Now similarly if $\th_j<\l_j^2$,
\begin{align}
\ddt(\th_j-\l^2_j)&\geq -kc\l_j(\th_j-\l_j^2).
\end{align}
and again $\th_j-\l_j^2 \to 0$ exponentially fast.\\

To see that synchronization occurs at an exponential rate, we add and subtract appropriately, and using \eqref{e:wedge},
\begin{align}
\ddt \Re\langle \phi_+,\phi_-\rangle&=\frac{k\th_+}{2\l_+}(\Re\langle \zeta,\phi_-\rangle-\Re\langle \zeta,\phi_+\rangle \Re\langle \phi_+,\phi_-\rangle)+\frac{k}{2}\l_+\Im \langle \phi_+,\zeta\rangle\Im\langle\phi_+,\phi_-\rangle \nonumber\\
&+\frac{k\th_-}{2\l_-}(\Re\langle \zeta,\phi_+\rangle-\Re\langle \zeta,\phi_-\rangle \Re\langle \phi_+,\phi_- \rangle)+\frac{k}{2}\l_-\Im \langle \zeta,\phi_-\rangle\Im\langle\phi_+,\phi_-\rangle\nonumber\\
&\geq \frac{k}{2}(1-\Re\langle \phi_+,\phi_-\rangle)(\frac{\th_+}{\l_+}\Re\langle \zeta,\phi_-\rangle+\frac{\th_-}{\l_-}\Re\langle \zeta, \phi_+\rangle)\\
&+\frac{k}{2}\Re\langle \phi_+,\phi_-\rangle(\Re\langle \zeta,\phi_-\rangle-\Re\langle\zeta,\phi_+\rangle)(\frac{\th_+}{\l_+}-\frac{\th_-}{\l_-})\nonumber\\
\end{align}
If $\frac{\th_+}{\l_+}\geq \frac{\th_-}{\l_-}$, then,
\begin{align}
\ddt \Re\langle \phi_+,\phi_-\rangle&\geq \frac{k}{2}\|\zeta\|(1-\Re\langle \phi_+,\phi_-\rangle)(\frac{\th_-}{\l_-}(\Re\langle \phi_+,\phi_-\rangle+\Re\langle\frac{\zeta}{\|\zeta\|},\phi_+\rangle) +\frac{\th_+}{\l_+}(\Re\langle \frac{\zeta}{\|\zeta\|}, \phi_-\rangle-\Re\langle \phi_+,\phi_-\rangle))\nonumber\\
&\geq\frac{k}{2}\|\zeta\|(1-\Re\langle \phi_+,\phi_-\rangle)(\frac{\th_-}{\l_-}(\Re\langle \phi_+,\phi_-\rangle+\Re\langle\frac{\zeta}{\|\zeta\|},\phi_+\rangle))
\end{align}
and if $\frac{\th_+}{\l_+}<\frac{\th_-}{\l_-}$, then
\begin{align}
\ddt \Re\langle \phi_+,\phi_-\rangle&\geq \frac{k}{2}\|\zeta\|(1-\Re\langle \phi_+,\phi_-\rangle)(\frac{\th_+}{\l_+}(\Re\langle \phi_+,\phi_-\rangle+\Re\langle\frac{\zeta}{\|\zeta\|},\phi_-\rangle) +\frac{\th_-}{\l_-}(\Re\langle \frac{\zeta}{\|\zeta\|}, \phi_+\rangle-\Re\langle \phi_+,\phi_-\rangle))\nonumber\\
&\geq\frac{k}{2}\|\zeta\|(1-\Re\langle \phi_+,\phi_-\rangle)(\frac{\th_+}{\l_+}(\Re\langle \phi_+,\phi_-\rangle+\Re\langle\frac{\zeta}{\|\zeta\|},\phi_-\rangle))
\end{align}
In either case we have,
\begin{align}
\ddt (&1-\Re\langle \phi_+,\phi_-\rangle)\leq -c(1-\Re\langle \phi_+,\phi_-\rangle),\nonumber\\
(&1-\Re\langle \phi_+,\phi_-\rangle)(t)\leq (1-\Re\langle \phi_+,\phi_-\rangle(0))e^{-ct}.
\end{align}
Thus proving the proposition.

\subsection{Synchronization with frozen parameters}
For Model 2 and $\mu=0$, turning the Cucke-Smale dynamics off, the situation is different.  We were restricted to only initial conditions satisfying \eqref{e:wedge} for the $\mu>0$ case, and while in the case where \eqref{e:wedge} holds with $\mu=0$, Proposition \ref{mod2sync} still holds,  we can also describe the synchronization dynamics for a wider class of initial conditions. An important note is that with the Cucker-Smale  dynamics effectively being turned off, each $\th_j$ remains fixed so that rather than $\th_j \to \bar{\th}$ and $\l_j^2\to \bar{\th}$, we see $\l_j^2 \to \th_j$ for each individual $j$ so that in the end each oscillator has a different mass,  determined by the initial desired mass $\th_j$. This can be appealing as we can see synchronization occur, while being able to choose the mass of each quantum oscillator.
\begin{proposition}\label{ncssynch}
The synchronization dynamics of global solutions  to system \eqref{model2}, with $\mu=0$, and initial conditions such that $\l_j^2(0)\leq \th_j$ for all $j=1,...,N$, either complete synchronization at an exponential rate occurs, or the system converges to an unstable Bipolar state.
\end{proposition}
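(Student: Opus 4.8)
The plan is to run the monotonicity argument used for \eqref{1abs} in \thm{t:synch1}, with the order parameter $\zeta=\frac{1}{N}\sum_j\psi_j$ as the single driving quantity, but to extract the needed mass bounds directly from the hypothesis $\lambda_j^2(0)\le\th_j$ rather than from alignment of the $\th_j$'s (which is unavailable when $\mu=0$). First I would show the hypothesis propagates in time. Since $\mu=0$ the $\th_j$ are constant, and from $\ddt\|\psi_j\|_2^2=k\Re\langle\zeta,\psi_j\rangle(\th_j-\lambda_j^2)$ the quantity $u_j:=\th_j-\lambda_j^2$ solves the linear homogeneous equation $\dot u_j=-k\Re\langle\zeta,\psi_j\rangle\,u_j$, whose flow preserves the sign of $u_j$. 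Hence $\lambda_j^2(t)\le\th_j$ for all $t$, giving the uniform bound $\lambda_j\le\sqrt{\th_j}$ (compatible with the global existence granted by \prop{prop:gwp2} under condition \eqref{eq:c2}).

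Next, differentiating the order parameter as for \eqref{e:mon} gives
\begin{equation*}
\ddt\|\zeta\|_2^2=k\left(\bar{\th}\|\zeta\|_2^2-\frac{1}{N}\sum_{j=1}^N\lambda_j^2\,\Re[\langle\zeta,\phi_j\rangle^2]\right),
\end{equation*}
and the termwise bound $\lambda_j^2\,\Re[\langle\zeta,\phi_j\rangle^2]\le\th_j\|\zeta\|_2^2$ (Cauchy--Schwarz together with $\lambda_j^2\le\th_j$, handling the sign of $\Re[\langle\zeta,\phi_j\rangle^2]$ by cases) yields $\ddt\|\zeta\|_2^2\ge0$. As the masses are bounded, $\|\zeta\|_2^2$ is bounded and nondecreasing, so $\|\zeta\|_2^2\to\rho_\infty<\infty$; \lem{BL} (after checking uniform continuity of $\ddt\|\zeta\|_2^2$ from the $\Sigma$-bounds) then forces $\ddt\|\zeta\|_2^2\to0$, hence every nonnegative summand tends to $0$. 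Assuming $\|\zeta(0)\|_2>0$ so that $\rho_\infty>0$ (the symmetric datum $\zeta\equiv0$ is the repulsive incoherent state), the squeeze along
\begin{equation*}
\th_j\|\zeta\|_2^2\ \ge\ \lambda_j^2\|\zeta\|_2^2\ \ge\ \lambda_j^2|\langle\zeta,\phi_j\rangle|^2\ \ge\ \lambda_j^2\,\Re[\langle\zeta,\phi_j\rangle^2]
\end{equation*}
forces, term by term, $\lambda_j^2\to\th_j$, $|\langle\zeta,\phi_j\rangle|\to\|\zeta\|_2$, and $\Im\langle\zeta,\phi_j\rangle\to0$; that is, $\phi_j\to\pm\zeta/\|\zeta\|_2$ for every $j$.

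This leaves precisely two limiting configurations. If all signs are $+$ we have full synchronization; once $\Re\langle\phi_j,\phi_k\rangle\to1$ there is a time $T$ beyond which $\Re\langle\zeta,\phi_\pm\rangle\ge c>0$ and \eqref{e:wedgeineq} holds for the minimizing pair, so the bootstrap estimate for $\ddt(1-\Re\langle\phi_+,\phi_-\rangle)$ established in \prop{mod2sync} applies for $t\ge T$ and yields the exponential rate. If some signs are $-$ we have a bipolar state; for any antipodal oscillator $\Re\langle\zeta,\phi_j\rangle\to-\|\zeta\|_2<0$, so with $u_j=\th_j-\lambda_j^2\ge0$ the mass equation gives $\dot u_j\ge0$ eventually, which is incompatible with $\lambda_j^2\to\th_j$ unless $u_j(0)=0$, i.e.\ $\lambda_j^2(0)=\th_j$ --- exactly the degeneracy recorded in \thm{m2synch}. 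Instability of the bipolar equilibrium follows as in the classification of steady states for Model 1: perturbing one oscillator off the antipole strictly increases the order parameter by the reverse triangle inequality, and $\ddt\|\zeta\|_2^2>0$ away from full synchronization then drives the configuration off the bipolar state.

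\textbf{Main obstacle.} The crux is the classification step. Without the wedge condition there is no a priori sign for $\Re\langle\zeta,\phi_j\rangle$, so masses and directions must be controlled \emph{simultaneously} through the single monotone functional $\|\zeta\|_2^2$; the delicate points are making the termwise squeeze rigorous (deducing genuine convergence $\lambda_j^2\to\th_j$, not merely control of $\limsup$), keeping each $\lambda_j$ bounded away from $0$ so that the decomposition $\psi_j=\lambda_j\phi_j$ remains meaningful, and verifying the uniform-continuity hypothesis of \lem{BL}.
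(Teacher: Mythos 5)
Your proposal is correct and follows essentially the same route as the paper: propagation of $\lambda_j^2\le\th_j$ from the sign-preserving linear ODE, monotonicity and boundedness of $\|\zeta\|_2^2$, Barbalat's lemma together with the decomposition into two nonnegative sums to force $\lambda_j^2\to\th_j$ and $\phi_j\to\pm\zeta/\|\zeta\|_2$, and the bootstrap via Proposition~\ref{mod2sync} for the exponential rate. The only local difference is the antipole-exclusion step, where you use eventual monotonicity of $u_j=\th_j-\lambda_j^2$ plus backward uniqueness of the linear ODE, while the paper integrates $\dot\lambda_j\le C(\th_j-\lambda_j^2)$ explicitly to show $\lambda_j$ would vanish in finite time --- both yield the same degeneracy condition $\lambda_j^2(0)=\th_j$ for the bipolar state.
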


\begin{proof}

Now, without a condition like \eqref{e:wedge} we resort to analyzing the order parameter again. We compute,

\begin{align}
\ddt \|\zeta\|_2^2= k\left(\bar{\th}\|\zeta\|_2^2-\frac{1}{N}\sum_{l=1}^N\lambda_l^2 \Re[\langle \phi_l, \zeta\rangle]^2\right)
\end{align}

Now since $\lambda_j^2(0)\leq \th_j$ and $\dot{\lambda}_j=\frac{k}{2} \Re\langle \zeta, \phi_j \rangle (\th_j-\lambda_j^2),$ we have $\lambda_j^2(t)\leq \th_j$ for all $t>0$ as well. Further,
\begin{align}
|\Re[\langle \phi_l, \zeta\rangle]^2|\leq |\langle \phi_l, \zeta\rangle|^2\leq \|\zeta\|_2^2,
\end{align}
since $\|\phi_l\|_2=1$. Therefore, 
\begin{align}
\frac{1}{N}\sum_{l=1}^N\lambda_l^2 \Re[\langle \phi_l, \zeta\rangle]^2\leq \bar{\th}\|\zeta\|_2^2
\end{align}
and hence $\ddt \|\zeta\|_2^2\geq 0$. Now $\|\zeta\|_2$ is monotonic, and to see it is bounded,
\begin{align}
\|\zeta\|_2=\left\|\frac{1}{N}\sum_{l=1}^N\lambda_l\phi_l\right\|_2\leq \frac{1}{N}\sum_{l=1}^N\lambda_l\leq \sqrt{\th_M},
\end{align}
where $\th_M=\max_j\th_j$.  Therefore $\|\zeta\|_2 \to \rho_{\infty}\in(0,\sqrt{\th_M}]$.

Further, we know $\ddt \|\zeta\|_2^2=\bar{\th}\|\zeta\|_2^2-\frac{1}{N}\sum_{l=1}^N\lambda_l^2 \Re[\langle \phi_l, \zeta\rangle]^2 \to 0$. Since $\lambda_l^2\leq \th_l$ and $\Re[\langle \phi_l, \zeta\rangle]^2\leq \|\zeta\|^2$,
\begin{align}
\bar{\th}\|\zeta\|_2^2-\frac{1}{N}\sum_{l=1}^N\lambda_l^2 \Re[\langle \phi_l, \zeta\rangle]^2&=\frac{1}{N}\sum_{l=1}^N\th_l\|\zeta\|_2^2-\lambda_l^2\Re[\langle \phi_l, \zeta\rangle]^2,\\
&=\frac{1}{N}\sum_{l=1}^N (\th_l-\lambda_l^2)\|\zeta\|_2^2+\frac{1}{N}\sum_{l=1}^N\lambda_l^2(\|\zeta\|_2^2-\Re[\langle \phi_l, \zeta\rangle]^2).
\end{align}
Now each of these sums are nonnegative, so $\ddt\|\zeta\|_2^2 \to 0$ if and only if $\lambda_l^2 \to \th_l$ and $\phi_l \to \pm\frac{1}{\|\zeta\|_2}\zeta$. Therefore we have synchronization.

For the special case of $\l_j^2(0)<\th_j$, we can further guarantee that the oscillator avoids the antipole.

Indeed, we suppose for this $j$ that $\phi_j \to -\frac{1}{\|\zeta\|_2}\zeta$. In that case there exists a time $T$ and constant $C<0$ such that $\frac{k}{2}\Re \langle \zeta,\phi_j\rangle\leq C<0$ for all $t\geq T$.  Then,
\begin{align}
\ddt\lambda_j&=\frac{k}{2}\Re\langle \zeta, \phi_j\rangle(\th_j-\lambda_j^2)\\
&\leq C(\th_j-\lambda_j^2), \ \ \text{on} \ [T,\infty),\\
\int_T^t \frac{d\lambda_j}{\th_j-\lambda_j^2(s)}&\leq \int_T^t C \dt,\\
\lambda_j(t)&\leq \sqrt{\th_j}\frac{(\sqrt{\th_j}+\lambda_j^T)e^{2\sqrt{\th_j}C(t-T)}-(\sqrt{\th_j}-\lambda_j^T)}{(\sqrt{\th_j}-\lambda_j^T)+(\sqrt{\th_j}+\lambda_j^T)e^{2\sqrt{\th_j}C(t-T)}}, \ \ \text{on} \ [T,\infty).
\end{align}

Therefore, at $t^*=\frac{1}{2\sqrt{\th_j}C}\ln\left(\frac{\sqrt{\th_j}-\lambda_j^T}{\sqrt{\th_j}+\lambda_j^T}\right)+T>T$, we have $\lambda_j(t^*)\leq 0$. In which case, either $\lambda_j(t^*)<0$, a contradiction, or $\lambda_j(t^*)=0$. If $\lambda_j(t^*)=0$, then $\dot{\lambda}_j(t^*)=\frac{k}{2}\Re\langle \zeta,\phi_j\rangle\th_j\leq C\th_j<0$, also a contradiction.

Therefore $\phi_j \to +\frac{1}{\|\zeta\|_2}\zeta$. Hence if for every $j=1,...,N$, the mass is strictly below its corresponding desired mass, $\l_j^2(0)<\th_j$, then complete synchronization at an exponential rate occurs.

Indeed, $\l_j^2 \to \th_j$ occurs at an exponential rate since $\Re\langle \zeta,\phi_j\rangle \geq C>0$. for all $t>T$ from fact that complete synchronization occurs. Further, we can conclude exponential synchronization as well, since complete synchronization occurs, there is a time $T$ such that for all $t>T$ \eqref{e:wedge} holds, and the argument in Proposition \ref{mod2sync} yields the exponential rate of synchronization.
\end{proof}

\section{Bipolar Synchronization and Incoherence}\label{sec:ex}
Let us now state two illustrative examples for Incoherence and Bipolar Synchronization behavior. These examples are important, as the standard Schr\"odinger-Lohe model, having identical oscillators, cannot possibly converge to an incoherent state due to the monotonicity (growth) of the order parameter. To construct an example for which bipolar synchronization emerges is not difficult, but it occurs in a somewhat contrived manner, as expected for an unstable equilibrium.
\begin{example}\label{bip}
First, Bipolar Synchronization can only occur if for some $j$, $\lambda_j^2(0)=\th_j$. Take any initial conditions for which $\l_1^2(0)=\th_1$, and $\phi_1(0)=-\frac{\zeta}{\|\zeta\|_2}(0)$, and $\l_j^2(0)<\th_j$ for all other $j$. Now symmetrizing around $\zeta$ so that we have a system of $2N$ oscillators where the first $N$ are the original oscillators and the oscillators corresponding to $N+1,...,2N$ satisfy, for each $j=1,...,N$,
\begin{align}
\Re\left\langle\phi_j,\frac{\zeta}{\|\zeta\|_2}\right\rangle(0)&=\Re\left\langle \phi_{j+N},\frac{\zeta}{\|\zeta\|_2}\right\rangle(0),\\
\Im\left\langle\phi_j,\frac{\zeta}{\|\zeta\|_2}\right\rangle(0)&=-\Im\left\langle \phi_{j+N},\frac{\zeta}{\|\zeta\|_2}\right\rangle(0),\\
\l_j(0)&=\l_{j+N}(0),\\
\th_j&=\th_{j+N}.
\end{align}
Then this system converges to the bipolar state.

Indeed, since $\l_k^2\leq \th_k$ for all $k$, Proposition \ref{prop:gwp2} guarantees a global solution and Proposition \ref{ncssynch} guarantees $\phi_j \to \pm \frac{\zeta}{\|\zeta\|_2}$. To see that $\phi_1,\phi_{N+1}=-\frac{\zeta}{\|\zeta\|_2}$ for all time we compute,
\begin{align}
\ddt \left\langle \frac{\zeta}{\|\zeta\|_2}, \phi_1\right\rangle|_{t=0}&=\frac{k}{2N}\sum_{l=1}^{2N}\l_l^2\left[\left\langle \phi_l,\frac{\zeta}{\|\zeta\|_2}\right\rangle^2-\Re\left[\left\langle\phi_l,\frac{\zeta}{\|\zeta\|_2}\right\rangle^2\right]\right],\\
\ddt \Re\left\langle \frac{\zeta}{\|\zeta\|_2}, \phi_1\right\rangle|_{t=0}&=0,\\
\ddt \Im\left\langle \frac{\zeta}{\|\zeta\|_2}, \phi_1\right\rangle|_{t=0}&=\frac{k}{2N}\sum_{l=1}^{2N}\l_l^2\Im\left[\left\langle \phi_l,\frac{\zeta}{\|\zeta\|_2}\right\rangle^2\right],\\
&=\frac{k}{N}\sum_{l=1}^N\l_l^2\Re\left\langle\phi_l,\frac{\zeta}{\|\zeta\|_2}\right\rangle\Im\left\langle\phi_l,\frac{\zeta}{\|\zeta\|_2}\right\rangle+\frac{k}{N}\sum_{l=N+1}^{2N}\l_l^2\Re\left\langle\phi_l,\frac{\zeta}{\|\zeta\|_2}\right\rangle\Im\left\langle\phi_l,\frac{\zeta}{\|\zeta\|_2}\right\rangle,\\
&=0.
\end{align}
Therefore  $\phi_1=-\frac{\zeta}{\|\zeta\|_2}$ for all time, and the same for $\phi_{N+1}$, resulting in a bipolar synchronization.

\end{example}
Incoherence, $\|\zeta\|_2 \to 0$, can happen for a similar initial configuration:
\begin{example}\label{inc}
For the same system as the above example except where $\l_1^2(0)>\th_1$, we see $\|\zeta\|_2 \to 0$.

Indeed, $\phi_1(t)=-\zeta(t)/\|\zeta\|_2$ for all time, but since $\dot{\l}_1=-\frac{k}{2}\|\zeta\|_2(t)(\th_1-\l_1^2)$, we see that $\l_1^2$ grows so that 
we expect $\lim_{t\to\infty}\|\zeta\|_2=0$, resulting in an incoherent state. We note however that Proposition \ref{prop:gwp2} no longer applies, so we don't automatically get global solutions or synchronization of the other oscillators. However, if the remaining $2N-2$ oscillators satisfy the wedge condition \eqref{e:wedge}, then we still have
\begin{align}
&\ddt \Re \langle \phi_+,\phi_-\rangle\geq 0,\\
&\Re\langle \phi_{\pm},\zeta\rangle \geq 0,
\end{align}
where $\phi_{+},\phi_-$ are chosen from these remaining oscillators. Therefore, the remaining $N-2$ oscillators must all remain bounded.

To see that $\|\psi_1\|_2$ also remains bounded so that we can extend from a local  to a global solution, we look at the order parameter again. Indeed if $\|\psi_1\|_2\to \infty$ as $t \to T<\infty$, then since all the other oscillators remain bounded, $\|\zeta\|_2 \to \infty$  as $t \to T$ as well.  However, for $\|\psi_1\|_2$ large, we see that $\ddt \|\zeta\|_2<0$,  a  contradiction.  Therefore this initial configuration also admits a global solution so that $\|\zeta\|_2 \to 0$.

Although, with $\|\zeta\|_2 \to 0$, it is possible that $\Re\langle \phi_+,\phi_-\rangle \to c<1$, which would truly represent an incoherent state, different from the bipolar synchronization. Both these examples take advantage of the fact that the system is symmetry preserving with respect to the order parameter.
\end{example}

\end{document}